\documentclass[12pt]{amsart}
\usepackage{cite}
\usepackage{amsmath}
\usepackage{textgreek}
\usepackage[T1]{fontenc}
\usepackage[utf8]{inputenc}
\usepackage{lmodern}
\usepackage{microtype}
\usepackage{mathtools,amssymb,amsthm}
\usepackage{enumitem}
\usepackage{xcolor}
\usepackage[colorlinks=true,
  linkcolor=blue!55!black,
  citecolor=green!40!black,
  urlcolor=blue!60!black]{hyperref}
\usepackage[nameinlink,noabbrev]{cleveref}

\setlist{itemsep=2pt,topsep=5pt}
\newtheorem{theorem}{Theorem}[section]
\newtheorem{proposition}[theorem]{Proposition}
\newtheorem{lemma}[theorem]{Lemma}
\newtheorem{corollary}[theorem]{Corollary}
\theoremstyle{definition}
\newtheorem{definition}[theorem]{Definition}

\newcommand{\func}[1]{\operatorname{#1}}

\begin{document}
\title[The Exact Completion of the Category of Polish Groups]{The Exact
Completion of the Category of Polish Groups}
\author{Martino Lupini}
\address{Dipartimento di Matematica, Universit\`{a} di Bologna, Piazza di
Porta S. Donato, 5, 40126 Bologna,\ Italy}
\email{martino.lupini@unibo.it}
\urladdr{https://www.lupini.org/}
\thanks{The author was partially supported by the Starting Grant 101077154
\textquotedblleft Definable Algebraic Topology\textquotedblright\ from the
European Research Council, the Gruppo Nazionale per le Strutture Algebriche,
Geometriche e le loro Applicazioni (GNSAGA) of the Istituto Nazionale di
Alta Matematica (INDAM), and the University of Bologna. Part of this work
was done during a visit by the author to Chalmers University of Technology
and the University of Gothenburg. The hospitality of these institutions is
gratefully acknowledged.}
\subjclass[2020]{Primary 18E08, 54H11; Secondary 18A35, 18B10, 03E15}
\keywords{Regular category, exact category, quasi-abelian category, abelian
category, Polish group, group with a Polish cover, Borel-definable
homomorphism, approximately multiplicative lift, Baire category}
\date{\today }

\begin{abstract}
This article shows that the exact completion as a regular category of the
homological category of Polish groups and continuous group homomorphisms is
the category of groups with a Polish cover and Borel-definable group
homomorphisms. We also obtain several equivalent characterizations for the
morphisms in the exact completion. An analogous description is deduced for
several important subcategories of the category of Polish groups.
\end{abstract}

\setcounter{tocdepth}{1}

\maketitle
\tableofcontents

%\shorttitle{}

\section{Introduction}

In this paper we study the category of Polish groups from the viewpoint of
category theory. This work builds on \cite{lupini_looking_2024}, which
considered the category of Polish \emph{abelian} groups. That category was
shown to be quasi-abelian in the sense of \cite%
{schneiders_quasi-abelian_1999}. Its canonical completion to an abelian
category (left heart) was identified with the category of abelian groups
with a Polish cover and Borel-definable group homomorphisms, as defined in 
\cite{bergfalk_definable_2024,bergfalk_definable_2024-1}; see also \cite%
{casarosa_homological_2026,lupini_applications_2025}.

Abelian categories form the main framework for the development of abstract
homological algebra in the setting of category theory. Quasi-abelian
categories are a natural generalization, encompassing categories of
commutative algebraic structures (such as abelian groups or modules) endowed
with a topology. In such categories, epimorphisms can be distinct from \emph{%
regular} epimorphisms (corresponding to \emph{quotients}). The left heart
construction associates with a quasi-abelian category an abelian category in
which the missing quotients are added canonically.

This construction admits a natural generalization to the nonabelian context
of regular and Barr-exact categories. A regular category is, roughly
speaking, a finitely complete category with pullback-stable quotients in
which every arrow factors as a quotient followed by a monomorphism. The
kernel pair of a morphism is the categorical analogue of a compatible
equivalence relation, usually called a \emph{congruence} in universal
algebra.

A regular category is \emph{exact} if every congruence is effective, namely
if it is the kernel pair of its quotient. (Exact categories in this sense
are sometimes called more precisely Barr-exact, to distinguish them from
exact categories in the sense of Quillen, which provide a different
generalization of abelian categories.)

The exact completion of a regular category (also called ex/reg completion,
to distinguish it from the exact completion as a category with finite
limits) freely adds effective quotients of internal equivalence relations,
producing a Barr-exact category. When the input is quasi-abelian, this
construction recovers the left heart; see \Cref{sec:abelian-case} below.

In this paper we provide an explicit description of the exact completion of
the regular category of Polish groups, which is in fact a \emph{homological}
category, as the category of groups with a Polish cover and Borel-definable
homomorphisms. In particular, this shows that the category of groups with a
Polish cover is also homological, besides being exact. Thus, it falls just
short of being \emph{semi-abelian}, which would additionally require it to
have binary coproducts.

This recognition of the exact completion of the category of Polish groups
can be seen as the natural extension of the main result of \cite%
{lupini_looking_2024}, which proves the corresponding fact for \emph{abelian}
Polish groups. The main technical tool in \cite{lupini_looking_2024} was the
complexity-theoretic characterization of Polish subgroups of abelian Polish
groups from \cite{solecki_coset_2009}, in terms of the corresponding coset
equivalence relation. This characterization is not (currently) available for
arbitrary Polish groups, as it is an open problem whether the main results
of \cite{solecki_coset_2009} hold in general. Thus, we replace the appeal to 
\cite{solecki_coset_2009} with Brown's Polishability theorem for extensions
of Polish groups admitting a Borel section. Brown proved this theorem while
extending Moore's theory of measurable cocycles from locally compact groups
to arbitrary Polish groups \cite{brown_extensions_1971,moore_extensions_1964}%
.

More generally, our techniques allow us to prove analogous descriptions for
the exact completions of a host of \emph{thick} subcategories of the
category of Polish groups, such as the category of non-Archimedean Polish
groups, or TSI Polish groups.

We also establish a version for arbitrary groups with a Polish cover of the
existence theorem for approximately additive lifts from Section~5 of \cite%
{lupini_looking_2024}. Combining this result with classical results about
cross-sections for locally compact Polish groups, we generalize \cite[%
Corollaries~6.19 and~6.20]{lupini_looking_2024} to groups that are not
necessarily abelian.

The paper is divided into four sections after this introduction. In Section~%
\ref{Section:Polish} we recall fundamental facts pertaining to the theory of
Polish groups and groups with a Polish cover. In Section~\ref%
{Section:regular} we recall the definition of regular and exact categories.
In Section~\ref{Section:completion} we present the description of the exact
completion of the category of Polish groups as a regular category in terms
of groups with a Polish cover. Finally, in Section~\ref{Section:better-lifts}
we prove the existence theorem for approximately multiplicative lifts and
its locally compact and Lie consequences.

\subsubsection*{Acknowledgments}

The author is grateful to Ronnie Chen and Ivan Di Liberti for many useful
conversations and to the Association of Neurodivergent Logicians
(NeuroLogic) for its support. ChatGPT by OpenAI (version GPT-5.6 Sol) and
Claude by Anthropic (version Opus 5) were used in the preparation of this
manuscript.

\section{Polish groups\label{Section:Polish}}

In this section we recall fundamental notions pertaining to Polish groups,
as can be found in \cite{gao_invariant_2009,kechris_classical_1995}, and
groups with a Polish cover, as introduced and studied in \cite%
{lupini_looking_2024,bergfalk_definable_2024,bergfalk_definable_2024-1,casarosa_homological_2026}%
.

\subsection{Polish groups and groups with a Polish cover}

A \emph{Polish space} is a separable topological space whose topology is
induced by a complete metric. A \emph{Polish group} is a topological group
whose underlying space is Polish. Let $G$ be a Polish group. A subgroup $%
N\leq G$ is \emph{Polish} if it carries a Polish group topology for which
the inclusion $N\rightarrow G$ is continuous. Such a topology is unique, its
Borel sets are precisely those inherited from $G$, and $N$ is a Borel
subgroup of $G$; see \cite[Theorem~9.10]{kechris_classical_1995} and \cite[%
Section~2.2]{gao_invariant_2009}. In this case, the quotient set $G/N$ is a
homogeneous space with a Polish cover, considered as a pointed space with
basepoint corresponding to the trivial $N$-coset. When $N$ is also dense in $
%
% TODO [minor]: "phantom homogeneous Polish space" is defined here and never
% used again. Likewise \textbf{FDLCPAb} is introduced in Sec 2.2 but appears
% in neither Prop 4.2 nor Prop 4.3. Either use them or cut them.
G$, $G/N$ is a phantom homogeneous Polish space. When $N$ is furthermore 
\emph{normal} in $G$, the quotient $G/N$ is a \emph{group with a Polish cover%
}.

If $G/N$ is a homogeneous space with a Polish cover, then we define $%
E_{N}^{G}\subseteq G\times G$ to be the coset relation of $N$ in $G$. A
subspace with a Polish cover of $G/N$ is a pointed subspace of the form $H/N$%
, where $H$ is a Polish subgroup of $G$ containing $N$ (which implies that $%
N $ is a Polish subgroup of $H$).

\subsection{Classes of Polish groups\label{Subsection:classes}}

We now introduce several full replete subcategories of the category of
Polish groups. Membership in these subcategories is characterized as follows:

\begin{itemize}
\item $G\in\mathbf{nAP}$ if and only if $G$ is \emph{non-Archimedean}, i.e.,
it has a basis of identity neighborhoods consisting of subgroups \cite[%
Section~2.4]{gao_invariant_2009} or, equivalently, it has a compatible
ultrametric \cite[Theorem~2.4.1]{gao_invariant_2009};

\item $G\in\mathbf{TSI}$ if and only if $G$ is TSI, i.e., it has a
compatible two-sided invariant metric or, equivalently, a basis of
conjugation-invariant identity neighborhoods \cite{allison_dynamical_2021};

\item $G\in\mathbf{CLI}$ if and only if $G$ is CLI, i.e., it has a
compatible complete left-invariant metric \cite%
{lupini_games_2018,allison_class_2024};

\item $G\in\mathbf{proLieP}$ if and only if $G$ is pro-Lie, i.e., the limit
of a tower of (finite-dimensional)\ Lie groups with surjective continuous
homomorphisms as bonding maps;

\item $G\in\mathbf{nATSI}$ if and only if $G$ is both non-Archimedean and
TSI or, equivalently, it has a compatible two-sided invariant ultrametric
or, equivalently, a basis of identity neighborhoods that are normal
subgroups \cite[Theorem~1.1]{gao_non-archimedean_2014}; see also \cite[%
Theorem~2.6]{gao_graev_2013} and \cite[Proposition~2.2]%
{ding_non-archimedean_2017};

\item $G\in\mathbf{LCP}$ if and only if $G$ is locally compact;

\item $G\in\mathbf{Lie}$ if and only if $G$ is a finite-dimensional real Lie
group;

\item $G\in\mathbf{PAb}$ if and only if $G$ is abelian;

\item $G\in\mathbf{proLiePAb}$ if and only if $G$ is abelian and pro-Lie;

\item $G\in\mathbf{proLieTSI}$ if and only if $G$ is TSI and pro-Lie;

\item $G\in\mathbf{CP}$ if and only if $G$ is compact \cite%
{hofmann_structure_2013};

\item $G\in\mathbf{AmenLCP}$ if and only if $G$ is locally compact and
amenable \cite{paterson_amenability_1988};

\item $G\in\mathbf{SolP}$ if and only if $G$ is solvable as an abstract
group \cite{robinson_course_1982};

\item $G\in\mathbf{EnALCP}$ if and only if $G$ is locally compact,
non-Archimedean, and \emph{elementary} \cite{wesolek_elementary_2015};

\item for an extension-closed variety $\mathcal{F}$ of finite groups as in 
\cite[Section~2.1]{ribes_profinite_2010}, $G\in\mathbf{pro\text{-}\mathcal{F}%
P}$ if and only if $G$ is a compact pro-$\mathcal{F}$ group;

\item $G\in\mathbf{FDLCP}$ if and only if $G$ is locally compact and
finite-dimensional;

\item $G\in\mathbf{FDLCPAb}$ if and only if $G$ is abelian and belongs to $%
\mathbf{FDLCP}$;

\item $G\in\mathbf{LCPAb}_{\mathrm{cg}}$ if and only if $G$ is locally
compact, compactly generated, and abelian;

\item $G\in\mathbf{TorLCPAb}$ if and only if $G$ is an abelian locally
compact topological torsion group;

\item $G\in\mathbf{LCPAb}(p)$ if and only if $G$ is an abelian locally
compact topological $p$-group;

\item $G\in\mathbf{FLCPAb}$ if and only if $G$ is an abelian locally compact
group of finite ranks in the sense of \cite[Definitions~2.5 and~2.6]%
{hoffmann_homological_2007};

\item $G\in \mathbf{Hilb}$ if and only if $G$ is the additive group of a
real Hilbert space;

\item $G\in \mathbf{Ban}$ if and only if $G$ is the attive group of a real
Banach space.
\end{itemize}

Some of the above classes of abelian Polish groups have also been considered
in \cite{lupini_applications_2025,lupini_looking_2024}.

Thus $\mathbf{Ban}$ is the full replete subcategory of $\mathbf{Ban}$
spanned by the additive Polish groups underlying separable real Banach
spaces. A continuous homomorphism between additive real topological vector
spaces is automatically $\mathbb{R}$-linear, so the morphisms in $\mathbf{Ban%
}$ are precisely the bounded linear operators.

\subsection{Polish coset relations}

Let $G$ be a Polish group, and $N$ be a Polish normal subgroup of $G$. We
consider the conjugation action $G\curvearrowright N$.

The following lemma is \cite[Lemma 4.20]{lupini_looking_2024}. Note that the
group $A$ is assumed therein to be abelian, but this hypothesis is not used
in the proof. A similar argument is used in the proof of \cite[%
Proposition~3.5]{reid_chief_2022}.

\begin{lemma}
\label{Lemma:G-module} Let $G$ and $A$ be Polish groups. Suppose that $%
G\curvearrowright A$ is an action of $G$ on $A$ by automorphisms of $A$ that
is Borel separately in each variable. Then the action is continuous.
\end{lemma}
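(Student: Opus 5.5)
The plan is to reduce joint continuity to separate continuity via the automatic continuity of Baire measurable homomorphisms, and then conclude with a standard Baire category argument (or directly via Kechris's theorem that separately continuous actions of Polish groups on Polish spaces are jointly continuous).

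\emph{Step 1: continuity in the second variable.} For each $g\in G$, the map $a\mapsto g\cdot a$ is an automorphism of $A$ that is Borel. A Borel homomorphism between Polish groups is continuous by Pettis' theorem \cite[Theorem~9.10]{kechris_classical_1995}. Hence each $g$ acts by a homeomorphism, and the action yields a homomorphism $\rho\colon G\to\func{Aut}(A)$ into the group of topological automorphisms.

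\emph{Step 2: continuity in the first variable.} Fix $a\in A$. The orbit map $\phi_a\colon G\to A$, $g\mapsto g\cdot a$, is Borel, but it is not a homomorphism, so Pettis does not apply directly. Instead I would use the following argument. Since $\phi_a$ is Borel, it is continuous on a dense $G_\delta$ (comeager) set $C\subseteq G$. To get continuity at an arbitrary $g_0$, I write $g=g_0h$, so that $\phi_a(g)=g_0\cdot\phi_a(h)$. Since $g_0$ acts continuously by Step~1, it suffices to prove continuity of $\phi_a$ at the identity $1_G$. For a neighborhood $V$ of $a$, I pick a neighborhood $W$ of $a$ and use a Pettis-type argument. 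The sets $\{g : g\cdot a\in U\}$ are Borel, hence Baire measurable. Take a countable dense $\{a_n\}\subseteq A$ and an identity neighborhood $U_0\subseteq A$ with $U_0U_0^{-1}$ small. The sets $B_n=\{g: g\cdot a\in a_nU_0\}$ cover $G$, so some $B_n$ is nonmeager and comeager in an open set $O$. Then $B_n^{-1}B_n$ contains an identity neighborhood by Pettis's lemma. For $g,h\in B_n$, we have $g\cdot a,\ h\cdot a\in a_nU_0$, and I want to deduce that $h^{-1}g\cdot a$ is close to $a$. This requires controlling $h^{-1}$ acting on the small difference $(h\cdot a)^{-1}(g\cdot a)$, which need not be uniform in $h$. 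This is the main obstacle.

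\emph{Fallback for the obstacle.} I would bypass the uniformity problem by appealing to the known result that a Borel action of a Polish group on a Polish space by homeomorphisms, i.e.\ one that is separately continuous after Step~1 in the space variable, is continuous, in a suitable form of \cite[Theorem~9.14]{kechris_classical_1995}. The cleanest route is a direct argument. The map $G\to\func{Aut}(A)$ is a homomorphism into a Polish group, provided $\func{Aut}(A)$ is given the topology of pointwise convergence on $A$ and of pointwise convergence of inverses, which makes it Polish (it is a $G_\delta$ in the homeomorphism group of a suitable compactification), or at least a Hausdorff group with countable network. The map is Borel because its compositions with evaluations at points of a countable dense set are Borel. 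By Pettis' theorem $\rho$ is continuous, so each orbit map is continuous. Finally, joint continuity follows from \cite[Theorem~9.14]{kechris_classical_1995}: a separately continuous action of a Polish group on a metrizable space is continuous. Equivalently, $\rho$ is continuous into $\func{Aut}(A)$ with the pointwise topology, and evaluation $\func{Aut}(A)\times A\to A$ is continuous at the relevant points by equicontinuity of a comeager set of transformations, obtained by a Baire category argument. The delicate point to verify is that the Borel structure generated by pointwise evaluation on $\func{Aut}(A)$ is standard, so that Pettis applies. If that fails, I would instead run the Baire argument of the previous paragraph using a countable base and the continuity of each fixed $g^{-1}$.
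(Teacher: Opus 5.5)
\textbf{There is a genuine gap in Step~2.} Your Step~1 and your final appeal to \cite[Theorem~9.14]{kechris_classical_1995} are fine. The one substantive point, continuity of the orbit maps $\phi_a\colon g\mapsto g\cdot a$, is not established.

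Your Pettis-type attempt stalls exactly where you say. It would need the automorphisms $h^{-1}$, for $h\in B_n$, to be uniformly controlled, which is essentially what you are trying to prove. The fallback rests on a false claim. For a general Polish group $A$, the group of topological automorphisms with pointwise convergence of $T$ and $T^{-1}$ is not a topological group at all. So it is neither Polish nor a subgroup of the homeomorphism group of a compactification.

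For example, take $A=\ell^2$, so this group is $GL(\ell^2)$ with the strong topology on $T$ and $T^{-1}$. Fix $x\neq 0$ and any finitely many test vectors. Choose a unit vector $e$ orthogonal to $x$ and to the test vectors of the first neighborhood, and put $S=I+(\lambda-1)\langle\cdot,e\rangle e$ and $T=I+\delta\langle\cdot,x\rangle e$. Then $S^{\pm1}$ fix those test vectors, and $T^{\pm1}=I\pm\delta\langle\cdot,x\rangle e$ are close to $I$ on any test vectors once $\delta$ is small. Yet $\lVert STx-x\rVert=\lambda\delta\lVert x\rVert^{2}$ is as large as you like, so multiplication is not continuous at $(I,I)$. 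Pettis's theorem therefore has nothing to apply to. Your last resort, rerunning the stalled Baire argument, is not an argument.

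\textbf{The fix.} The missing idea is to use one fixed group element instead of uniform control.
\begin{enumerate}
\item Let $C\subseteq G$ be a dense $G_\delta$ set with $\phi_a|_C$ continuous \cite[Theorem~8.38]{kechris_classical_1995}, and let $g_n\to g$.
\item Right translations are homeomorphisms, so $Cg^{-1}\cap\bigcap_n Cg_n^{-1}$ is comeager. Hence it contains some $h$.
\item Then $hg_n\to hg$ inside $C$, so $h\cdot(g_n\cdot a)=\phi_a(hg_n)\to\phi_a(hg)=h\cdot(g\cdot a)$.
\item The automorphism $y\mapsto h^{-1}\cdot y$ is continuous by your Step~1. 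Applying it gives $g_n\cdot a\to g\cdot a$.
\end{enumerate}
With separate continuity in hand, \cite[Theorem~9.14]{kechris_classical_1995} yields joint continuity, as you planned.
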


\begin{corollary}
\label{Corollary:continuous-conjugation} Let $G/N$ be a group with a Polish
cover. Then:

\begin{enumerate}
\item the conjugation action $G\curvearrowright N$ is continuous;

\item $E_{N}^{G}$ is a Polish subgroup of $G\times G$.
\end{enumerate}
\end{corollary}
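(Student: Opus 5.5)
For (1), the plan is to apply Lemma~\ref{Lemma:G-module} with $A=N$, where $N$ carries its own Polish group topology, and with $G$ acting by conjugation $(g,n)\mapsto gng^{-1}$. Normality of $N$ makes each map $n\mapsto gng^{-1}$ a group automorphism of $N$. It remains to check that the action is Borel separately in each variable.

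\begin{itemize}
\item Fix $n\in N$. The map $G\to N$, $g\mapsto gng^{-1}$, is continuous into $G$. Since the Borel sets of $N$ are precisely those inherited from $G$, a map into $N$ that is Borel as a map into $G$ is Borel as a map into $N$.
\item Fix $g\in G$. The map $N\to N$, $n\mapsto gng^{-1}$, is continuous from $N$ into $G$, because the inclusion $N\to G$ is continuous. Hence it is Borel into $G$, and by the same remark it is Borel into $N$.
\end{itemize}

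Lemma~\ref{Lemma:G-module} then yields continuity of $G\times N\to N$. This Borel-transfer step is the only point needing care, and it rests on the facts recalled in the preliminaries.

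For (2), the plan is to realise $E_N^G$ as a continuous injective image of a semidirect product. Since the conjugation action is continuous by (1), the semidirect product $N\rtimes G$ is a Polish group under the product topology, with multiplication
\[
(n,g)(n',g')=(n\,gn'g^{-1},\,gg').
\]
Define
\[
\phi\colon N\rtimes G\to G\times G,\qquad \phi(n,g)=(ng,g).
\]

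\begin{itemize}
\item \emph{Homomorphism.} We have $\phi(n,g)\phi(n',g')=(ngn'g',gg')$. Also $\phi\bigl(n\,gn'g^{-1},gg'\bigr)=(n\,gn'g^{-1}gg',gg')=(ngn'g',gg')$, so the two agree.
\item \emph{Continuity.} $\phi$ is continuous, because the inclusion $N\to G$ is continuous.
\item \emph{Injectivity.} If $\phi(n,g)=\phi(n',g')$, then $g=g'$ and hence $n=n'$.
\item \emph{Image.} The image is exactly $\{(x,y):xy^{-1}\in N\}$, which is the coset relation $E_N^G$.
\end{itemize}

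Transporting the topology along $\phi$ therefore gives a Polish group topology on $E_N^G$ whose inclusion into $G\times G$ is continuous. This is what it means for $E_N^G$ to be a Polish subgroup of $G\times G$. I expect no real obstacle here once (1) is in hand. The only points to confirm are that the semidirect product is a topological group, which uses continuity of the action together with continuity of the group operations of $N$ and $G$, and that it is Polish as a product of Polish spaces.
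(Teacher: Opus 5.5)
Your proposal is correct and follows the paper's proof. Part (1) applies Lemma~\ref{Lemma:G-module} to the conjugation action after checking separate Borelness via the inherited Borel structure; part (2) realises $E_N^G$ as the image of the Polish semidirect product $N\rtimes G$ under a continuous homomorphism. The paper's map $(x,g)\mapsto(g,xg)$ differs from your $\phi$ only by swapping coordinates, which is harmless since $E_N^G$ is symmetric, and your explicit homomorphism and injectivity checks fill in details the paper leaves implicit.
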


\begin{proof}
(1) The conjugation action is Borel separately in each variable, since the
Borel structure on $N$ is inherited from $G$. Thus, (1) follows from Lemma~%
\ref{Lemma:G-module}.

(2) By (1) the product topology renders the semidirect product $N\rtimes G$
a Polish group. The homomorphism 
\begin{equation*}
N\rtimes G\rightarrow G\times G\text{, }\left( x,g\right) \mapsto \left(
g,xg\right)
\end{equation*}%
has $E_{N}^{G}$ as image.
\end{proof}

If $\mathcal{B}$ is a class of Polish groups that is closed under taking
closed normal subgroups and quotients by closed normal subgroups, we define
a group with a cover in $\mathcal{B}$ to be a group with a Polish cover $G/N$
where $G$, the coset relation $E_{N}^{G}$, and (hence) $N$ are in $\mathcal{B%
}$. For classes such as the TSI or the non-Archimedean TSI Polish groups,
the condition on $E_{N}^{G}$ records an additional regularity property of
the conjugation action; it is not merely a condition on $G$ and $N$
separately.

\section{Regular and exact categories\label{Section:regular}}

In this section we recall some notions concerning regular and exact
categories as can be found in \cite%
{barr_exact_1971,gran_introduction_2021,carboni_regular_1998,shulman_exact_2012}%
. Fundamental notions pertaining to category theory can be found in the
monographs \cite%
{awodey_category_2006,mac_lane_categories_1998,leinster_basic_2014}.

\subsection{Regular epimorphisms}

We recall the notion of \emph{regular epimorphism} in a category. Let $%
\mathcal{C}$ be a \emph{finitely complete} category. Given an arrow $%
f:D\rightarrow C$, one can consider the pullback diagram 
\begin{equation*}
\begin{array}{ccc}
D\times _{C}D & \overset{p_{2}}{\rightarrow } & D \\ 
p_{1}\downarrow &  & \downarrow f \\ 
D & \underset{f}{\rightarrow } & C%
\end{array}%
\end{equation*}%
This is called the \emph{kernel pair} of $f$. More generally, given arrows $%
f_{i}:D_{i}\rightarrow C$ for $i\in \left\{ 1,2\right\} $ one can consider
the \emph{pullback}%
\begin{equation*}
\begin{array}{ccc}
D_{1}\times _{C}D_{2} & \overset{p_{2}}{\rightarrow } & D_{2} \\ 
p_{1}\downarrow &  & \downarrow f_{2} \\ 
D_{1} & \underset{f_{1}}{\rightarrow } & C%
\end{array}%
\end{equation*}%
called the \emph{fiber product} of $f_{1}$ and $f_{2}$.

A morphism in $\mathcal{C}$ is called a \emph{regular epimorphism} if it is
the coequalizer of a parallel pair of arrows \cite[Definition~1.5]%
{gran_introduction_2021}. In a finitely complete category, it is then the
coequalizer of its kernel pair \cite[Exercise~1.9]{gran_introduction_2021}.

\subsection{Strong epimorphisms}

An arrow $f:A\rightarrow B$ in a category $\mathcal{C}$ is called a \emph{%
strong epimorphism} if for every commutative square%
\begin{equation*}
\begin{array}{ccc}
A & \overset{f}{\rightarrow } & B \\ 
g\downarrow &  & \downarrow h \\ 
C & \underset{m}{\rightarrow } & D%
\end{array}%
\end{equation*}%
in $\mathcal{C}$ where $m$ is monic, there exists a unique arrow $%
t:B\rightarrow C$ such that $mt=h$ and $tf=g$ \cite[Definition~1.1]%
{gran_introduction_2021}. Then in a finitely complete category, one has that
every split epimorphism is a regular epimorphism, every regular epimorphism
is a strong epimorphism, and every strong epimorphism is an epimorphism \cite%
[Proposition~1.8]{gran_introduction_2021}. Generally, an arrow is an \emph{%
isomorphism} if and only if it is a monomorphism and a strong epimorphism 
\cite[Lemma~1.3]{gran_introduction_2021}.

\subsection{Regular categories}

The notion of regular category is defined in terms of the notion of regular
epimorphism; see \cite[Definition~1.10]{gran_introduction_2021}.

\begin{definition}
Let $\mathcal{C}$ be a finitely complete category. Then $\mathcal{C}$ is 
\emph{regular} if:

\begin{enumerate}
\item the kernel pair of any morphism admits a coequalizer;

\item the pullback along any morphism of a regular epimorphism is a regular
epimorphism.
\end{enumerate}
\end{definition}

In a regular category, every arrow $f$ has an essentially unique
factorization%
\begin{equation*}
f=mq
\end{equation*}%
where $q$ is a regular epimorphism (the \emph{coimage} of $f$) and $m$ is a
monomorphism (the \emph{image} of $f$) \cite[Theorem~1.11]%
{gran_introduction_2021}. Furthermore, such a factorization is \emph{%
pullback-stable}, and these properties characterize regular categories among
finitely complete categories \cite[Theorem~1.14]{gran_introduction_2021}. A
regular category satisfies the following properties; see \cite[%
Proposition~1.13]{gran_introduction_2021}.

\begin{lemma}
\label{Lemma:regular} Let $\mathcal{C}$ be a regular category. Then:

\begin{enumerate}
\item a morphism is a regular epimorphism if and only if it is a strong
epimorphism;

\item if $gf$ is a regular epimorphism, then so is $g$;

\item if $g,f$ are regular epimorphisms, then so is $gf$ whenever defined;

\item if $g:A\rightarrow B$ and $g^{\prime }:A^{\prime }\rightarrow
B^{\prime }$ are regular epimorphisms, then so is $g\times g^{\prime
}:A\times A^{\prime }\rightarrow B\times B^{\prime }$.
\end{enumerate}
\end{lemma}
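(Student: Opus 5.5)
We prove the four items in the order (1), (3), (2), (4). Each uses the pullback-stable (regular epi, mono) factorization in $\mathcal{C}$ and the general facts recalled above: in a finitely complete category, regular epimorphisms are strong, and an arrow is an isomorphism if and only if it is monic and strongly epic.

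For (1), one direction is already recalled: every regular epimorphism is strong. Conversely, let $f$ be a strong epimorphism and factor it as $f=mq$ with $q$ a regular epimorphism and $m$ monic. The square formed by $f$, $q$ and $m$ (with $h=\mathrm{id}$) has a diagonal $t$ with $mt=\mathrm{id}$. Hence $m$ is a split epimorphism, so strongly epic, and also monic, so an isomorphism. Therefore $f$ is isomorphic to $q$ and is regular.

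For (3), by (1) it suffices to show that a composite of strong epimorphisms is strong. This is a direct diagram chase. Given a square $hgf=m u$ with $m$ monic, first use that $f$ is strong to get $t_1$ with $t_1 f=u$ and $m t_1=hg$. Then use that $g$ is strong on the square formed by $t_1$ and $h$ to get the required diagonal. Uniqueness holds because $m$ is monic.

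For (2), suppose $gf$ is regular, hence strong. Given a square $hg=mu$ with $m$ monic, precompose with $f$. Strongness of $gf$ gives $t$ with $t\,gf=uf$ and $mt=h$. Since $m$ is monic and $mtg=hg=mu$, we get $tg=u$. So $g$ is strong, hence regular by (1).

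For (4), the step I expect to need the most care, write $g\times g'=(g\times 1_{B'})\circ(1_A\times g')$. The map $g\times 1_{B'}:A\times B'\to B\times B'$ is the pullback of $g$ along the projection $B\times B'\to B$: the square with sides $\pi_B$ and $g$ is a pullback, by a direct check of the universal property in the finitely complete category. By axiom (2) of regularity, $g\times 1_{B'}$ is therefore a regular epimorphism. Symmetrically, $1_A\times g'$ is a regular epimorphism. Then (3) gives the conclusion. The only point to verify carefully is the identification of $g\times 1_{B'}$ as a pullback, which is routine.
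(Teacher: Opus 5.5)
Your proof is correct. The paper gives no proof of its own and instead cites Gran's notes (Proposition~1.13), which argue the same way you do: (1)--(3) come from the (regular epi, mono) factorization together with the closure properties of strong epimorphisms, and (4) from writing $g\times g'=(g\times 1_{B'})(1_A\times g')$, where each factor is a pullback of a regular epimorphism along a product projection.
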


\subsection{Relations}

We now recall the notion of an (internal) \emph{relation} in a finitely
complete category $\mathcal{C}$. Let $X$ and $Y$ be objects of $\mathcal{C}$%
. A relation from $X$ to $Y$ is a pair of arrows $r_{1}:R\rightarrow X$ and $%
r_{2}:R\rightarrow Y$ that is jointly monic, i.e., such that the arrow $%
\left( r_{1},r_{2}\right) :R\rightarrow X\times Y$ is monic \cite[%
Definition~2.1]{gran_introduction_2021}. Two relations are identified when
they define the same subobject of $X\times Y$. A relation $R$ on $X$ is:

\begin{itemize}
\item \emph{reflexive} if there is an arrow $\delta :X\rightarrow R$ such
that $r_{i}\delta =1_{X}$ for $i\in \left\{ 1,2\right\} $;

\item \emph{symmetric} if there is an arrow $\sigma :R\rightarrow R$ such
that $r_{1}\sigma =r_{2}$ and $r_{2}\sigma =r_{1}$;

\item \emph{transitive} if, writing $q_{1},q_{2}:R\times_{X}R\rightarrow R$
for the pullback of $r_{2}$ and $r_{1}$, there is an arrow $%
\tau:R\times_{X}R\rightarrow R$ such that 
\begin{equation*}
r_{1}\tau=r_{1}q_{1}\qquad\text{and}\qquad r_{2}\tau=r_{2}q_{2};
\end{equation*}

\item an (internal) \emph{equivalence relation}, or a \emph{congruence}, if
it is symmetric, reflexive, and transitive.
\end{itemize}

\subsection{Barr-exact categories}

Let $\mathcal{C}$ be a finitely complete category. The \emph{kernel pair} of
an arrow $f:D\rightarrow C$ is, in particular, an (internal) equivalence
relation \cite[Lemma~2.2]{gran_introduction_2021}. Indeed, we have a
pullback diagram%
\begin{equation*}
\begin{array}{ccc}
D\times _{C}D & \overset{p_{2}}{\rightarrow } & D \\ 
p_{1}\downarrow &  & \downarrow f \\ 
D & \underset{f}{\rightarrow } & C%
\end{array}%
\end{equation*}%
Thus $R:=D\times _{C}D$, with the jointly monic pair $(p_{1},p_{2})$, is an
internal equivalence relation on $D$. An internal equivalence relation $%
R\hookrightarrow X\times X$ is \emph{effective} if it is the kernel pair of
some morphism. A regular category is \emph{Barr-exact} if every internal
equivalence relation is effective \cite[Definition~5]{carboni_regular_1998}.
In the rest of the paper, we will simply call Barr-exact categories \emph{%
exact}.

\subsection{Fibrations}

Let $F:\mathcal{F}\rightarrow \mathcal{E}$ be a functor. Fix an object $I$
of $\mathcal{E}$. Then the \emph{fiber} of $F$ at $I$ is the subcategory $%
\mathcal{F}_{I}$ of $\mathcal{F}$ comprising the objects $X$ of $\mathcal{F}$
with $F\left( X\right) =I$ and the morphisms $\alpha :X\rightarrow Y$ of $%
\mathcal{F}$ with $F\left( \alpha \right) =\mathrm{id}_{I}$ \cite[%
Definition~A.7.1]{borceux_malcev_2004}. If $\alpha :J\rightarrow I$ is a
morphism in $\mathcal{E}$ then an arrow $f:Y\rightarrow X$ in $\mathcal{F}$
is \emph{cartesian} over $\alpha $ when $F\left( f\right) =\alpha $, and
whenever $g:Z\rightarrow X$ is a morphism in $\mathcal{F}$ such that $%
F\left( g\right) $ factors as $\alpha \beta $ there exists a unique morphism 
$h:Z\rightarrow Y $ in $\mathcal{F}$ such that $F\left( h\right) =\beta $
and $g=fh$ \cite[Definition A.7.2]{borceux_malcev_2004}.

Let $\mathcal{E}$ and $\mathcal{F}$ be categories, and $F:\mathcal{F}%
\rightarrow \mathcal{E}$ a functor. Then $F$ is a \emph{fibration} when for
every arrow $\alpha :J\rightarrow I$ in $\mathcal{E}$ and every object $X$
in the fiber over $I$ there exists in $\mathcal{F}$ a \emph{cartesian
morphism} $f:Y\rightarrow X$ over $\alpha $ \cite[Definition A.7.3]%
{borceux_malcev_2004}.

Let $\mathcal{E}$ be a category with pullbacks of split epimorphisms. The
category $\mathrm{Pt}\left( \mathcal{E}\right) $ of \emph{points} of $%
\mathcal{E}$ is defined to have as objects the split epimorphisms of $%
\mathcal{E}$ with a given splitting, and as morphisms the pairs of morphisms
in $\mathcal{E}$ that commute with both the split epimorphism and its
splitting. The \emph{fibration of points} of $\mathcal{E}$ is the functor $%
\mathrm{Pt}\left( \mathcal{E}\right) \rightarrow \mathcal{E}$ that assigns
to an object of $\mathrm{Pt}\left( \mathcal{E}\right) $ corresponding to a
split epimorphism $p$ with section $s$ the target of $p$ \cite[Theorem 2.1.15%
]{borceux_malcev_2004}. The fiber over $I\in \mathcal{E}$ of the functor $%
\mathrm{Pt}\left( \mathcal{E}\right) \rightarrow \mathcal{E}$ is the
category $\mathrm{Pt}_{I}\left( \mathcal{E}\right) $ of points over $I$. A
morphism in $\mathrm{Pt}\left( \mathcal{E}\right) $ is cartesian precisely
when the corresponding commuting square with the split epimorphisms as sides
is a pullback.

\subsection{Split Short Five Lemma}

Consider, in a category, a diagram%
\begin{equation*}
\begin{array}{lllll}
K & \overset{\kappa ^{\prime }}{\rightarrow } & A^{\prime } & 
\rightleftarrows _{\beta ^{\prime }}^{\alpha ^{\prime }} & B \\ 
\downarrow \mathrm{id}_{K} &  & \downarrow \theta &  & \downarrow \mathrm{id}%
_{B} \\ 
K & \overset{\kappa }{\rightarrow } & A & \rightleftarrows _{\beta }^{\alpha
} & B%
\end{array}%
\end{equation*}%
where:

\begin{itemize}
\item $\alpha $ and $\alpha ^{\prime }$ are split epimorphisms with
splittings $\beta $ and $\beta ^{\prime }$, i.e., $\alpha \beta =1_{B}$ and $%
\alpha ^{\prime }\beta ^{\prime }=1_{B}$;

\item $\kappa $ and $\kappa ^{\prime }$ are kernels of $\alpha $ and $\alpha
^{\prime }$;

\item the diagram \emph{commutes}, i.e., $\theta \kappa ^{\prime }=\kappa $, 
$\alpha \theta =\alpha ^{\prime }$, and $\theta \beta ^{\prime }=\beta $.
\end{itemize}

The category satisfies the Split Short Five Lemma if, in any such diagram, $%
\theta $ must be an isomorphism; see \cite[Definition 3.1.1]%
{borceux_malcev_2004}.

A category $\mathcal{E}$ is \emph{pointed} if it admits an object $\ast $
that is both initial and terminal. Thus, for every object $A$ of $\mathcal{E}
$ there exists a unique arrow $\ast \rightarrow A$ and a unique arrow $%
A\rightarrow \ast $; see \cite[Definition 0.2.1]{borceux_malcev_2004}. More
generally, for every pair of objects $A,B$ there exists a unique arrow $\ast
:A\rightarrow B$ that factors through $\ast $. The following is \cite[%
Proposition 3.1.2]{borceux_malcev_2004}:

\begin{proposition}
If $\mathcal{E}$ is a pointed category with pullbacks of split epimorphisms,
the following conditions are equivalent:

\begin{enumerate}
\item the Split Short Five Lemma holds;

\item for every object $Y$ of $\mathcal{E}$ and corresponding morphism $%
\alpha _{Y}:\ast \rightarrow Y$, the inverse image functor%
\begin{equation*}
\alpha _{Y}^{\ast }:\mathrm{Pt}_{Y}\left( \mathcal{E}\right) \rightarrow 
\mathrm{Pt}_{\ast }\left( \mathcal{E}\right)
\end{equation*}%
of the fibration of points of $\mathcal{E}$ reflects isomorphisms;

\item in the fibration of points of $\mathcal{E}$, all inverse image
functors reflect isomorphisms.
\end{enumerate}
\end{proposition}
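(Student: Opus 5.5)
We prove the cycle of implications (3)$\Rightarrow$(2)$\Rightarrow$(1)$\Rightarrow$(3). The implication (3)$\Rightarrow$(2) is immediate, since (2) is the special case of (3) for the inverse image functors along the arrows $\alpha_Y:\ast\rightarrow Y$. Throughout we use that the inverse image functor along $\alpha :J\rightarrow I$ is given by pulling back a point $(p:A\rightarrow I,s)$ along $\alpha$, with the induced splitting. In particular, $\alpha_Y^{\ast}$ sends $(p,s)$ to the kernel object $\ker p$ with its unique splitting over $\ast$. Such pullbacks exist because split epimorphisms are stable under pullback and the category has pullbacks of split epimorphisms.

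For (2)$\Rightarrow$(1), take a diagram as in the Split Short Five Lemma. The pair $(\theta ,\mathrm{id}_B)$ is a morphism $(\alpha',\beta')\rightarrow(\alpha,\beta)$ in $\mathrm{Pt}_B(\mathcal{E})$, because $\alpha\theta=\alpha'$ and $\theta\beta'=\beta$. Applying $\alpha_B^{\ast}$ gives, up to the canonical identifications of kernels with pullbacks along $\ast\rightarrow B$, the unique arrow $K\rightarrow K$ compatible with $\kappa,\kappa'$. Since $\theta\kappa'=\kappa$ and $\kappa$ is monic, this arrow is $\mathrm{id}_K$, which is an isomorphism. Since $\alpha_B^{\ast}$ reflects isomorphisms, $\theta$ is an isomorphism in $\mathrm{Pt}_B(\mathcal{E})$. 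Its inverse is a morphism of $\mathcal{E}$, so $\theta$ is an isomorphism in $\mathcal{E}$.

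For (1)$\Rightarrow$(3), which is the main step, take $\alpha:J\rightarrow I$ and a morphism $\theta:(p',s')\rightarrow(p,s)$ in $\mathrm{Pt}_I(\mathcal{E})$ such that $\alpha^{\ast}\theta$ is an isomorphism. Compose $\ast\rightarrow J$ with $\alpha$. By pasting of pullbacks, $\alpha_I^{\ast}\cong\alpha_J^{\ast}\alpha^{\ast}$ up to natural isomorphism. Hence $\alpha_I^{\ast}\theta$ is also an isomorphism, so it suffices to treat the case $\alpha=\alpha_I$. Now $\alpha_I^{\ast}\theta$ is an isomorphism $\phi:\ker p'\rightarrow\ker p$ satisfying $\kappa\phi=\theta\kappa'$.

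To reach the exact shape of the Split Short Five Lemma diagram, where the left vertical map is an identity, replace $\kappa'$ by $\kappa'\phi^{-1}:\ker p\rightarrow A'$. This arrow is still a kernel of $p'$, and it satisfies $\theta(\kappa'\phi^{-1})=\kappa$. We also have $p\theta=p'$ and $\theta s'=s$. The Split Short Five Lemma then gives that $\theta$ is an isomorphism in $\mathcal{E}$. Its inverse automatically commutes with the projections and splittings, so it is a morphism of points, and $\theta$ is an isomorphism in $\mathrm{Pt}_I(\mathcal{E})$.

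The only delicate points are the pasting isomorphism $\alpha_I^{\ast}\cong\alpha_J^{\ast}\alpha^{\ast}$ and the bookkeeping of the chosen kernels. Both are routine once the pullback along $\ast\rightarrow I$ is identified with the kernel.
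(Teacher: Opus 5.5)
Your proof is correct. The paper does not prove this itself; it only cites \cite[Proposition 3.1.2]{borceux_malcev_2004}, and your argument is essentially the standard one given there: identify $\alpha_Y^{\ast}$ with taking kernels, and reduce an arbitrary change of base $\alpha^{\ast}$ to $\alpha_I^{\ast}$ via $\alpha_I=\alpha\circ\alpha_J$ and pasting of pullbacks. Your reindexing $\kappa'\phi^{-1}$ correctly adapts this to the paper's identity-on-kernels form of the Split Short Five Lemma.
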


\subsection{Protomodular and homological categories}

We recall the notion of a (Bourn) \emph{protomodular category} \cite[%
Definition 3.1.3]{borceux_malcev_2004}:

\begin{definition}
\label{Definition:protomodular} A category $\mathcal{E}$ is \emph{%
protomodular} when:

\begin{enumerate}
\item $\mathcal{E}$ has pullbacks of split epimorphisms along any map;

\item all the inverse image functors of the fibration $\pi :\mathrm{Pt}%
\left( \mathcal{E}\right) \rightarrow \mathcal{E}$ of points reflect
isomorphisms.
\end{enumerate}
\end{definition}

For example, the category of groups is protomodular \cite[Example 3.1.4]%
{borceux_malcev_2004}. Every abelian category is protomodular \cite[%
Example~3.1.5]{borceux_malcev_2004}.

In terms of pointed categories and protomodular categories, one defines 
\emph{homological categories} \cite[Definition 4.1.1]{borceux_malcev_2004}:

\begin{definition}
\label{Definition:homological} A category $\mathcal{E}$ is \emph{homological}
when:

\begin{enumerate}
\item $\mathcal{E}$ is \emph{pointed};

\item $\mathcal{E}$ is \emph{regular};

\item $\mathcal{E}$ is \emph{protomodular}.
\end{enumerate}
\end{definition}

\subsection{Subcategories}

The natural notion of morphism between regular categories is given by \emph{%
regular functors}. These are the functors that preserve finite limits and
regular epimorphisms.

\begin{definition}
\label{Definition:regular-subcategory} Suppose that $\mathcal{C}$ is a
regular category. Define a regular subcategory of $\mathcal{C}$ to be a full
subcategory $\mathcal{D}$ of $\mathcal{C}$ that is also a regular category,
such that the inclusion functor $\mathcal{D}\rightarrow \mathcal{C}$ is
regular.
\end{definition}

It is easily seen that if $\mathcal{C}$ is a protomodular category and $%
\mathcal{D}$ is a regular subcategory of $\mathcal{C}$, then $\mathcal{D}$
is also protomodular. In a similar fashion one defines the notion of exact
subcategory.

\begin{definition}
\label{Definition:exact-subcategory} Suppose that $\mathcal{C}$ is an exact
category. Define an exact subcategory of $\mathcal{C}$ to be a regular
subcategory that is also exact.
\end{definition}

\subsection{Normal monics}

Let $\mathcal{E}$ be a category with finite limits. Then a (necessarily
monic \cite[Lemma 3.2.2]{borceux_malcev_2004}) morphism $f:X\rightarrow Y$
is \emph{normal} to some equivalence relation $r:R\rightarrow Y\times Y$ on $%
Y$ \cite[Definition 3.2.1]{borceux_malcev_2004} when:

\begin{enumerate}
\item $f\times f$ factors through $r$ yielding a pullback%
\begin{equation*}
\begin{array}{ccc}
X\times X & \overset{\xi }{\rightarrow } & R \\ 
\downarrow &  & \downarrow r \\ 
X\times X & \underset{f\times f}{\rightarrow } & Y\times Y%
\end{array}%
\end{equation*}

\item the diagram%
\begin{equation*}
\begin{array}{ccc}
X\times X & \overset{\xi }{\rightarrow } & R \\ 
p_{0}\downarrow &  & \downarrow d_{0} \\ 
X & \underset{f}{\rightarrow } & Y%
\end{array}%
\end{equation*}%
is also a pullback; see also \cite{bourn_normal_2000,bourn_direct_2005}.
\end{enumerate}

A monic arrow $f$ is normal if it is normal to some equivalence relation $R$%
, which is uniquely determined by $f$ when $\mathcal{E}$ is protomodular
with finite limits \cite[Theorem 3.2.8]{borceux_malcev_2004}.

\subsection{Exact homological categories}

In this paper, we are interested in homological categories and \emph{exact}
homological categories. Exact homological categories fall just short of
being \emph{semi-abelian}, which would additionally require them to have
binary coproducts; see \cite[Section 2.5]{janelidze_semi-abelian_2002}.

A category $\mathcal{C}$ is \emph{homological} if and only if it satisfies
the following conditions:

\begin{enumerate}
\item it has binary products and a zero object;

\item it has pullbacks of (split) epimorphisms;

\item it has coequalizers of kernel pairs;

\item it satisfies the Split Short Five Lemma;

\item regular epimorphisms are pullback-stable.
\end{enumerate}

See \cite[Section 2.5]{janelidze_semi-abelian_2002} and \cite[Chapters 3
and~4]{borceux_malcev_2004}. It is also exact if, in addition, all
equivalence relations are effective.

A homological category is exact if and only if every normal arrow is a
kernel (the converse being always true); see \cite%
{metere_bourn-normal_2017,bourn_normal_2000,bourn_direct_2005}.

Exact homological categories have been studied in \cite%
{bourn_direct_2005,everaert_baer_2004,everaert_baer_2004-1,gray_relative_2016,gray_complete_2022,bourn_protomodularity_1998}%
.

\subsection{The exact completion of a regular category}

A regular category $\mathcal{C}$ admits a canonical completion to an exact
category \textrm{Ex}$(\mathcal{C})$, containing $\mathcal{C}$ as a regular
subcategory, such that, for every regular functor $F:\mathcal{C}\rightarrow 
\mathcal{E}$ where $\mathcal{E}$ is an exact category, there exists an
essentially unique \emph{regular} functor $\hat{F}:\mathrm{Ex}\left( 
\mathcal{C}\right) \rightarrow \mathcal{E}$ such that $\hat{F}|_{\mathcal{C}%
}\cong F$. Such a completion is also called the ex/reg completion, and
denoted by \textrm{Ex}$_{\mathrm{reg}}\left( \mathcal{C}\right) $ or $%
\mathcal{C}_{\mathrm{ex/reg}}$, to stress the fact that it is the exact
completion of $\mathcal{C}$ \emph{as a regular category}. Different notions
of exact completion exist for more general categories, such as categories
with finite limits \cite{carboni_free_1982}. Naturally, these completions
generally do \emph{not} coincide when applied to a regular category.

One can explicitly construct the exact completion of a regular category $%
\mathcal{C}$ as follows. Let $\mathrm{Rel}\left( \mathcal{C}\right) $ be the
category that has the same objects as $\mathcal{C}$. A morphism $%
X\rightarrow Y$ in $\mathrm{Rel}\left( \mathcal{C}\right) $ is a relation,
namely a jointly monic span 
\begin{equation*}
X\xleftarrow{\ r_{1}\ }R\xrightarrow{\ r_{2}\ }Y,
\end{equation*}%
or equivalently a monomorphism $\left( r_{1},r_{2}\right) :R\rightarrow
X\times Y$. If $R$ is a relation from $X$ to $Y$ and $S$ is a relation from $%
Y$ to $Z$, their composite is obtained by forming the pullback $R\times
_{Y}S $, mapping it to $X\times Z$, and taking the regular image. The
identity relation on $X$ is the diagonal $\Delta _{X}$. Given a morphism $%
f:X\rightarrow Y$ in $\mathcal{C}$, one can regard it as a morphism of $%
\mathrm{Rel}\left( \mathcal{C}\right) $ by identifying it with its graph.
Furthermore, any relation $R$ from $X$ to $Y$ has a corresponding \emph{%
opposite} relation $R^{\circ }$ from $Y$ to $X$. Then, if $f:X\rightarrow Y$
is an arrow in $\mathcal{C}$, the relation $f^{\circ }f$ is the kernel pair
of $f$, and $ff^{\circ }=\Delta _{Y}$ if and only if $f$ is a regular
epimorphism \cite{carboni_diagram_1991}.

One can describe the exact completion $\mathrm{Ex}\left( \mathcal{C}\right) $
in terms of \textrm{Rel}$\left( \mathcal{C}\right) $ as follows. The objects
of $\mathrm{Ex}\left( \mathcal{C}\right) $ are the \emph{equivalence
relations} $R$ on objects $X$ of $\mathcal{C}$. If $R$ and $S$ are
equivalence relations on $X$ and $Y$, respectively, a morphism $\left(
X,R\right) \rightarrow \left( Y,S\right) $ is a relation $T$ from $X$ to $Y$
satisfying 
\begin{equation*}
TR=T=ST,\qquad R\leq T^{\circ }T,\qquad TT^{\circ }\leq S.
\end{equation*}%
The first two equalities express compatibility and saturation, while the two
inequalities express totality and single-valuedness modulo the given
equivalence relations. Composition is inherited from $\mathrm{Rel}\left( 
\mathcal{C}\right) $, and the identity morphism on $\left( X,R\right) $ is $%
R $. The canonical regular embedding of $\mathcal{C}$ into its exact
completion sends $X$ to $\left( X,\Delta _{X}\right) $ and a morphism to its
graph; see \cite[Definition 11]{carboni_regular_1998}.

Suppose now that $\mathcal{C}$ is a homological category. Then, in
particular, $\mathcal{C}$ is a regular category. Thus, one can consider the
exact completion $\mathrm{Ex}\left( \mathcal{C}\right) $ of $\mathcal{C}$.
Then, by definition, $\mathrm{Ex}\left( \mathcal{C}\right) $ is exact.
Furthermore, the hypothesis that $\mathcal{C}$ is a homological category
implies that $\mathrm{Ex}\left( \mathcal{C}\right) $ is also a homological
category \cite[Proposition 5.2]{gran_semi-localizations_2016}. Thus, in this
case $\mathrm{Ex}\left( \mathcal{C}\right) $ is an \emph{exact homological
category}.

The relation-theoretic construction of the exact completion of a regular
category goes back to \cite{succi_cruciani_teoria_1975}, while \cite%
{carboni_regular_1998} established the general existence and universal
properties of regular and exact completions. Infinitary and weak-limit
variants were studied in \cite{hu_note_1996}, and \cite{lack_note_1999} gave
a sheaf-theoretic construction of the ex/reg completion. Further
perspectives can be found in \cite%
{gran_semi-localizations_2016,garner_lex_2012,shulman_exact_2012}.

\subsection{Exact sequences}

Let $\mathcal{C}$ be a homological category. One defines a \emph{short exact
sequence}%
\begin{equation*}
\ast {}\longrightarrow K\overset{k}{\longrightarrow }X\overset{p}{%
\longrightarrow }Y\longrightarrow \ast
\end{equation*}%
to be a pair $\left( k,p\right) $ where $p$ is a \emph{regular epimorphism}
and $k$ is the kernel of $p$ (i.e., the pullback of $p$ along the trivial
map $\ast \rightarrow Y$); see also \cite%
{bourn_3_2001,bourn_torsion_2006,clementino_torsion_2006,gran_torsion_2007,bourn_baer_2007,janelidze_characterization_2007}%
.

The following definition extends the usual notion of \emph{thick} (or \emph{%
Serre}) subcategory \cite[Definition 8.3.21]{kashiwara_categories_2006} to
arbitrary homological categories.

\begin{definition}
\label{Definition:thick} Let $\mathcal{C}$ be a homological category. A 
\emph{thick} subcategory $\mathcal{S}$ of $\mathcal{C}$ is a regular full
subcategory such that, for every subobject $X$ of an object of $\mathcal{S}$%
, and for every short exact sequence%
\begin{equation*}
\ast {}\longrightarrow K\overset{k}{\longrightarrow }X\overset{p}{%
\longrightarrow }Y\longrightarrow \ast
\end{equation*}%
in $\mathcal{C}$, $X$ is isomorphic to an object of $\mathcal{S}$ if and
only if both $K$ and $Y$ are isomorphic to objects of $\mathcal{S}$.
\end{definition}

Related notions of hereditary torsion class and Birkhoff subcategory have
been considered in \cite%
{bourn_torsion_2006,clementino_torsion_2006,lopez_cafaggi_torsion_2022}. The
proof of \cite[Lemma 2.13]{lopez_cafaggi_torsion_2022} shows that if $%
\mathcal{S}$ is a thick subcategory of a homological category $\mathcal{C}$
and $\mathcal{C}$ is exact, then $\mathcal{S}$ is also exact. (See also \cite%
{everaert_baer_2004,janelidze_galois_1994} for the same conclusion under the
stronger hypothesis that $\mathcal{S}$ is a Birkhoff subcategory.)

A proof similar to that of \cite[Proposition 2.2]%
{gran_semi-localizations_2016}, where a related result is stated under the
stronger assumption that $\mathcal{S}$ is closed under subobjects, gives the
following proposition.

\begin{proposition}
\label{Proposition:completion-thick} Let $\mathcal{C}$ be a homological
category, and $\mathcal{S}$ a thick subcategory of $\mathcal{C}$ in the
sense of \Cref{Definition:thick}. Then the universal property of $\mathrm{Ex}%
\left( \mathcal{S}\right) $ applied to the composition $\mathcal{S}%
\rightarrow \mathcal{C}\rightarrow \mathrm{Ex}\left( \mathcal{C}\right) $
produces a functor $\mathrm{Ex}\left( \mathcal{S}\right) \rightarrow \mathrm{%
Ex}\left( \mathcal{C}\right) $ that establishes an equivalence between $%
\mathrm{Ex}\left( \mathcal{S}\right) $ and an exact subcategory of $\mathrm{%
Ex}\left( \mathcal{C}\right) $.
\end{proposition}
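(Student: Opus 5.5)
We plan to use the explicit relational description of the exact completion recalled above. An object of $\mathrm{Ex}(\mathcal{S})$ is a pair $(X,R)$, where $R$ is an equivalence relation on an object $X$ of $\mathcal{S}$, and a morphism is a relation $T$ satisfying $TR=T=ST$, $R\leq T^{\circ}T$, $TT^{\circ}\leq S$. The induced functor $\hat{F}:\mathrm{Ex}(\mathcal{S})\rightarrow \mathrm{Ex}(\mathcal{C})$ is, up to isomorphism, the one sending $(X,R)$ to $(X,R)$ regarded in $\mathcal{C}$ and $T$ to $T$. This requires two preliminary facts. First, the inclusion $\mathcal{S}\rightarrow\mathcal{C}$ is regular, so it preserves finite limits and regular images. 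Second, the calculus of relations, including composition (pullback followed by regular image), is therefore computed identically in $\mathcal{S}$ and in $\mathcal{C}$.

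\textbf{Faithfulness and fullness.} A subobject $T\rightarrowtail X\times Y$ in $\mathcal{C}$ with $X,Y\in\mathcal{S}$ is a subobject of an object of $\mathcal{S}$. Using the hypothesis of \Cref{Definition:thick} with the trivial sequence $\ast\rightarrow T\rightarrow T\rightarrow\ast$, we would like to conclude that $T$ lies in $\mathcal{S}$. More precisely, $T$ is a subobject of an object of $\mathcal{S}$, and $K=\ast$ and $Y=T$ then need checking. This is circular, so we instead use $\ast\rightarrow T \overset{=}{\rightarrow} T$ with $X=T$; the condition gives $T\in\mathcal{S}$ iff $\ast,T\in\mathcal{S}$, which is again circular. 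Hence the key use of thickness must be with a nontrivial sequence. Take the projection $T\rightarrow X$ with its regular image $T_1\subseteq X$ and kernel $K\subseteq Y$, which lies in $\mathcal{S}$ because $\mathcal{S}$ is regular and hence closed under pullbacks among its objects. The image $T_1$ must be shown to lie in $\mathcal{S}$; for morphisms of $\mathrm{Ex}$ the totality condition $R\leq T^{\circ}T$ forces $T_1=X$. Then $\ast\rightarrow K\rightarrow T\rightarrow X\rightarrow\ast$ is short exact, with $T$ a subobject of $X\times Y\in\mathcal{S}$ and $K,X\in\mathcal{S}$, so $T\in\mathcal{S}$. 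Thus every morphism $(X,R)\rightarrow(Y,S)$ in $\mathrm{Ex}(\mathcal{C})$ between objects coming from $\mathcal{S}$ already comes from $\mathcal{S}$. Since relations are subobjects, $\hat F$ is fully faithful. The same argument, applied to the regular images formed when composing relations, shows that these images stay in $\mathcal{S}$.

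\textbf{Exactness of the essential image.} The essential image $\mathcal{D}$ is equivalent to $\mathrm{Ex}(\mathcal{S})$, which is exact. It then suffices to show that the inclusion $\mathcal{D}\rightarrow\mathrm{Ex}(\mathcal{C})$ preserves finite limits and regular epimorphisms, that is, that $\hat F$ is regular. This follows from the universal property (\cite[Proposition 5.2]{gran_semi-localizations_2016}-style argument): $\hat F$ is by construction regular. Thus $\mathcal{D}$ is a regular, and exact, subcategory in the sense of \Cref{Definition:exact-subcategory}.

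\textbf{Main obstacle.} The main obstacle is the fullness step. There we must verify that the relations in $\mathcal{C}$ needed (morphisms, and the composites $TR$, $T^{\circ}T$) live in $\mathcal{S}$; this is exactly where closure under subobjects is replaced by thickness via the short exact sequence $K\rightarrow T\rightarrow X$. That sequence uses protomodularity of $\mathcal{C}$ to ensure the projection is a regular epimorphism with the stated kernel.
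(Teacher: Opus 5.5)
Your strategy is the one the paper intends: run the relational argument of \cite[Proposition~2.2]{gran_semi-localizations_2016}, replacing closure under subobjects by thickness through the short exact sequence $\ast\rightarrow K\rightarrow T\rightarrow X\rightarrow\ast$. The surrounding steps are fine. Totality makes $T\rightarrow X$ a regular epimorphism, the calculus of $\mathcal{S}$-relations is the same in $\mathcal{S}$ and in $\mathcal{C}$, and regularity of $\hat F$ comes from the universal property.

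The gap is the step everything rests on, namely $K\in\mathcal{S}$. Your reason, closure of $\mathcal{S}$ under pullbacks of its objects, does not apply. $K$ is the pullback of $T\rightarrow X$ along $\ast\rightarrow X$, and $T$ is exactly the object not yet known to lie in $\mathcal{S}$. That $K$ is a subobject of $Y$ does not help either, since thick subcategories need not be closed under subobjects; that is precisely the hypothesis being dropped. In fact your argument uses only totality. For $X=\ast$, every subobject $T$ of $Y$ is a total relation from $\ast$ to $Y$ whose kernel is $T$ itself. So, as written, the argument would prove that $\mathcal{S}$ is closed under subobjects. This is false, for example, for $\mathbf{LCP}$ in $\mathbf{P}$: the non-locally-compact group $\mathbb{Z}^{\mathbb{N}}$ is a subobject of $\mathbb{T}^{\mathbb{N}}$ via $n\mapsto n\alpha$, with $\alpha$ irrational, in each coordinate.

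The missing idea is to use the other two morphism axioms to identify $K$. Write $T\colon (X,R)\rightarrow (Y,S)$. Let $N_{S}=\{y:(0,y)\in S\}$ be the normalization of $S$, i.e., the pullback of $r_{1}\colon S\rightarrow Y$ along $0\colon \ast\rightarrow Y$. Reason with generalized elements, which is legitimate in a regular category.
\begin{itemize}
\item Since $\mathcal{C}$ is pointed, the zero point factors through every subobject, so $(0,0)\in T$.
\item Compatibility $ST=T$ then gives $N_{S}\leq K$.
\item Single-valuedness $TT^{\circ}\leq S$, applied to $(0,0),(0,y)\in T$, gives $(0,y)\in S$, hence $K\leq N_{S}$.
\end{itemize}
Thus $K\cong N_{S}$ is a finite limit of $S$, $Y$ and $\ast$. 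All three lie in $\mathcal{S}$ because $(Y,S)$ is an object of $\mathrm{Ex}(\mathcal{S})$, so $K\in\mathcal{S}$. Only now does thickness, applied to $\ast\rightarrow N_{S}\rightarrow T\rightarrow X\rightarrow\ast$ with $T\leq X\times Y$, yield $T\in\mathcal{S}$.

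Incidentally, protomodularity plays no role in making $T\rightarrow X$ a regular epimorphism with kernel $K$. Totality and the existence of finite limits suffice.
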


\subsection{The abelian case}

\label{sec:abelian-case}

Let $\mathcal{A}$ be a quasi-abelian category and let $\func{LH}(\mathcal{A}%
) $ be its left heart in the sense of \cite{schneiders_quasi-abelian_1999};
see also \cite{henrard_left_2022}. A quasi-abelian category is a homological
category, which is exact if and only if it is an abelian category. The
canonical embedding 
\begin{equation*}
\mathcal{A}\longrightarrow \func{LH}(\mathcal{A})
\end{equation*}%
is fully faithful and regular, its image is closed under subobjects, and
every object of the left heart is a quotient of an object of $\mathcal{A}$ 
\cite[Theorem~1.3]{henrard_left_2022}. By the recognition criterion for
exact completions \cite{carboni_regular_1998}, this allows one to identify $%
\mathrm{LH}\left( \mathcal{A}\right) $ with the exact completion of $%
\mathcal{A}$ as a regular category.

\section{Exact completion of Polish groups\label{Section:completion}}

In this section we observe that Polish groups form a \emph{homological
category}, and describe its exact completion (as a regular category) as the
category of \emph{groups with a Polish cover}.

\subsection{Polish groups}

We let $\mathbf{P}$ be the category whose objects are Polish groups and
whose arrows are continuous group homomorphisms. Its full subcategory $%
\mathbf{PAb}$ spanned by Polish abelian groups was considered in \cite%
{lupini_looking_2024}. As observed there, $\mathbf{PAb}$ is a quasi-abelian
category in the sense of \cite{schneiders_quasi-abelian_1999}.

\begin{proposition}
\label{prop:regularity} The category $\mathbf{P}$ is regular and pointed.
Its regular epimorphisms are the continuous surjective homomorphisms.
\end{proposition}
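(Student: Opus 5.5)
The plan is to verify the definition of regular category directly, with all constructions made concretely in $\mathbf{P}$.

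\emph{Pointedness and finite limits.} The trivial group is a zero object. Finite products are products of groups with the product topology, which is Polish. The equalizer of $f,g:G\rightarrow H$ is the closed subgroup $\{x:f(x)=g(x)\}$. Hence pullbacks are the closed subgroups $G_{1}\times_{H}G_{2}\subseteq G_{1}\times G_{2}$, and $\mathbf{P}$ is finitely complete. Checking the universal properties is routine, because continuity into a closed subspace is automatic.

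\emph{Regular epimorphisms are exactly the continuous surjections.} Suppose first that $f:G\rightarrow H$ is a continuous surjective homomorphism. Its kernel $K$ is closed and normal. By the open mapping theorem for Polish groups, $f$ is open, so $f$ induces a topological isomorphism $G/K\cong H$. The kernel pair is $E=\{(x,y):f(x)=f(y)\}$. Let $u:G\rightarrow L$ be continuous with $up_{1}=up_{2}$ on $E$. Then $u$ vanishes on $K$ and so factors uniquely through $G/K\cong H$ as a homomorphism, which is continuous by the universal property of the quotient topology. Thus $f$ is the coequalizer of its kernel pair.

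For the converse, let $f:G\rightarrow H$ be any arrow. Its closed image $\overline{f(G)}$ need not equal $f(G)$, so I instead factor $f$ as
\[
G\rightarrow G/\ker f\rightarrow H.
\]
Here $G/\ker f$ is Polish and the second map is a continuous injective homomorphism, hence a monomorphism in $\mathbf{P}$. If $f$ is a regular epimorphism, then it is strong, so the mono $G/\ker f\rightarrow H$ admits a section compatible with the square and is therefore an isomorphism. Consequently $f$ is surjective. The same factorization shows that every arrow is a regular epimorphism followed by a monomorphism, and that the coequalizer of the kernel pair of $f$ is $G\rightarrow G/\ker f$. This settles condition (1).

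\emph{Pullback-stability.} Let $p:G\rightarrow H$ be a continuous surjection and $g:L\rightarrow H$ continuous. The pullback projection
\[
G\times_{H}L\rightarrow L
\]
is surjective at the level of sets, since for each $l$ there is an $x$ with $p(x)=g(l)$. By the previous paragraph it is therefore a regular epimorphism, which is condition (2).

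I expect the main subtlety to be the converse direction, where regular epimorphisms might a priori only have dense image. It is handled by the factorization through $G/\ker f$ together with the fact that regular epimorphisms are strong, as recalled earlier in the paper. The only substantial input beyond this is the open mapping theorem for Polish groups, which identifies surjections with quotient maps.
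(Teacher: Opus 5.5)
Your proof is correct and follows essentially the same route as the paper: finite limits come from products and closed equalizers, the Open Mapping Theorem shows that a continuous surjection is the coequalizer of its kernel pair (the coset relation of $\ker f$), and pullback-stability follows because pulled-back surjections are surjective. The only variation is how you show that regular epimorphisms are surjective: you use strongness together with the factorization $G\to G/\ker f\to H$, whereas the paper notes that coequalizers in $\mathbf{P}$ are quotient maps by closed normal subgroups; both arguments are valid, and yours has the small advantage of not needing a description of arbitrary coequalizers in $\mathbf{P}$.
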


\begin{proof}
The trivial group is terminal and initial, finite products carry the product
topology, and the equalizer of two continuous homomorphisms is a closed
subgroup. Hence $\mathbf{P}$ has finite limits.

Let $f:G\rightarrow H$ be a continuous homomorphism and put $N=\ker (f)$.
The kernel pair $G\times _{H}G\rightrightarrows G$ is the coset relation of
the closed normal subgroup $N$, and its coequalizer is the quotient map $%
G\rightarrow G/N$. If $f$ is surjective, the Open Mapping Theorem for Polish
groups identifies $H$ topologically with $G/N$; hence $f$ is a regular
epimorphism. Conversely, coequalizers in $\mathbf{P}$ are quotient
homomorphisms and are therefore surjective.

Finally, the pullback of a surjective homomorphism along an arbitrary
continuous homomorphism is again surjective. Its domain is a closed subgroup
of a product of Polish groups and is therefore Polish. Thus regular
epimorphisms are pullback-stable.
\end{proof}

The subobjects of a Polish group in $\mathbf{P}$ can be identified with its
Polish subgroups. Likewise, it is easily seen that the internal equivalence
relations in $\mathbf{P}$ correspond up to isomorphism to groups with a
Polish cover, and kernels correspond to closed normal subgroups. It easily
follows that $\mathbf{P}$ is a regular category, which is also a consequence
of the results in \cite{borceux_topological_2005}.

The Split Short Five Lemma is also verified directly for $\mathbf{P}$, which
proves protomodularity. This can actually be regarded as a special instance
of general results pertaining to internal groups in a finitely complete
category; see \cite{bourn_regular_2004,borceux_malcev_2004}. We conclude
that $\mathbf{P}$ is indeed a \emph{homological category}.

\subsection{Subcategories}

A short exact sequence in $\mathbf{P}$ is a short exact sequence of groups%
\begin{equation}
1\longrightarrow N\longrightarrow G\longrightarrow H\longrightarrow 1
\label{Eq:extension}
\end{equation}%
where all the homomorphisms are continuous. Thus, $G\rightarrow H$ is a
surjective continuous group homomorphism with kernel (the image of) $N$.
Consequently, a full replete subcategory $\mathcal{S}$ of $\mathbf{P}$ is a
regular subcategory whenever it is closed under finite products, closed
subgroups, and quotients by closed normal subgroups. Such a subcategory is
thick if and only if, for any extension as in \eqref{Eq:extension} such that 
$H$ and $N$ are in $\mathcal{S}$, if $G$ is a subobject of an element of $%
\mathcal{S}$, then $G$ is in $\mathcal{S}$.

We now consider the classes of Polish groups introduced in %
\Cref{Subsection:classes}.

\begin{proposition}
The following are \emph{thick subcategories} of the category of Polish
groups:

\begin{enumerate}
\item $\mathbf{PAb}$;

\item $\mathbf{nAP}$;

\item $\mathbf{CLI}$;

\item $\mathbf{LCP}$;

\item $\mathbf{Lie}$;

\item $\mathbf{proLiePAb}$;

\item $\mathbf{CP}$;

\item $\mathbf{AmenLCP}$;

\item $\mathbf{EnALCP}$;

\item $\mathbf{SolP}$;

\item $\mathbf{pro\text{-}\mathcal{F}P}$, for every extension-closed variety 
$\mathcal{F}$ of finite groups;

\item $\mathbf{FDLCP}$;

\item $\mathbf{LCPAb}_{\mathrm{cg}}$;

\item $\mathbf{TorLCPAb}$;

\item $\mathbf{LCPAb}(p)$, for every prime $p$;

\item $\mathbf{FLCPAb}$.
\end{enumerate}
\end{proposition}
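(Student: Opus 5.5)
The plan is to reduce thickness to a checklist and then verify the checklist class by class, citing standard structure theory. By the discussion preceding the statement, a full replete subcategory $\mathcal{S}$ of $\mathbf{P}$ is a regular subcategory as soon as it is closed under finite products, closed subgroups, and quotients by closed normal subgroups. It is then thick provided it is closed under extensions inside the ambient class. Precisely: for an extension $1\rightarrow N\rightarrow G\rightarrow H\rightarrow 1$ in $\mathbf{P}$ with $N,H\in\mathcal{S}$ and $G$ a Polish subgroup of a member of $\mathcal{S}$, one needs $G\in\mathcal{S}$. The other direction of thickness, that $G\in\mathcal{S}$ forces $N,H\in\mathcal{S}$, follows from closure under closed subgroups and quotients. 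The reason is that in a short exact sequence $N$ is a closed normal subgroup of $G$, and $H\cong G/N$ by the Open Mapping Theorem used in \Cref{prop:regularity}.

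So for each class I will check four properties: (a) finite products, (b) closed subgroups, (c) Hausdorff quotients, and (d) the restricted extension property. A useful observation for (d) concerns a Polish subgroup $G$ of some $G'\in\mathcal{S}$ that is an extension of $H$ by $N$. The continuous injection $G\rightarrow G'$ often lets us pull back properties of $G'$, for example abelianness, solvability, or compactness when combined with the other data.

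I would then go through the list.

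\begin{itemize}
\item For $\mathbf{PAb}$ and $\mathbf{SolP}$, (a)--(c) are algebraic. For (d), abelianness of $G$ is inherited from $G'$ as an abstract subgroup, and solvable-by-solvable is solvable.
\item For $\mathbf{nAP}$, (a)--(c) are standard, and extensions of non-Archimedean by non-Archimedean Polish groups are non-Archimedean. To see this, take $V$ open in $G$ with $V\cap N$ containing an open subgroup of $N$. Pull back an open subgroup of $H$ and use a small open subgroup of $N$ together with local cross-section arguments. If that is delicate, I would instead use that $G$ is Polish with an injection into a non-Archimedean group and cite the known extension result from the literature (Gao).
\item For $\mathbf{CLI}$, I would cite the known three-space property: CLI is closed under closed subgroups, quotients, and extensions.
\item For $\mathbf{LCP}$ and $\mathbf{CP}$, local compactness and compactness are three-space properties for Polish groups.
\item For $\mathbf{Lie}$, use Gleason--Yamabe (no small subgroups is a three-space property).
\item For $\mathbf{FDLCP}$, use dimension additivity.
\item For $\mathbf{AmenLCP}$, use the standard permanence of amenability.
\item For $\mathbf{EnALCP}$, use Wesolek's closure of the elementary class under extensions and closed subgroups.
\item For $\mathbf{pro\text{-}\mathcal{F}P}$, compactness plus extension-closedness of $\mathcal{F}$ applied to finite continuous quotients.
\item For the abelian locally compact classes ($\mathbf{proLiePAb}$, $\mathbf{LCPAb}_{\mathrm{cg}}$, $\mathbf{TorLCPAb}$, $\mathbf{LCPAb}(p)$, $\mathbf{FLCPAb}$), appeal to the corresponding permanence results from Hoffmann--Spitzweck and from \cite{lupini_looking_2024}, where these classes were shown to be thick in the quasi-abelian setting. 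Abelianness of $G$ again comes from the ambient object, so the abelian thickness already known there suffices.
\end{itemize}

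The main obstacle is step (d) for the topological properties. This matters most for $\mathbf{CLI}$ and $\mathbf{nAP}$, where the topology of $G$ is not locally compact and three-space arguments require care. It also matters for $\mathbf{proLiePAb}$: pro-Lie is not in general extension-closed, and there the hypothesis that $G$ embeds in a member of the class, together with abelianness, is genuinely needed. For those I would first try to cite the existing literature results directly. Only failing that would I argue by hand using open-map and Baire category arguments.
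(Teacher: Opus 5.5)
Your plan matches the paper's proof. Both reduce thickness to closure under finite products, closed subgroups, quotients and the restricted extension property, then verify each class by citing the standard three-space and permanence results; the abelian classes are handled by observing that a Polish subgroup of an abelian group is abelian, so thickness inside $\mathbf{PAb}$ suffices.

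Two small corrections:
\begin{itemize}
\item $\mathbf{proLiePAb}$ is not locally compact. The paper uses that pro-Lie Polish abelian groups are closed under extensions within $\mathbf{PAb}$, so the ambient embedding is needed only to force abelianness.
\item For $\mathbf{pro\text{-}\mathcal{F}P}$, first note that $G$ is totally disconnected (a three-space property), hence profinite, before checking its finite quotients.
\end{itemize}
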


\begin{proof}
(1) The category $\mathbf{PAb}$ is a regular subcategory of $\mathbf{P}$; in
fact it is quasi-abelian \cite{lupini_looking_2024}. If $X$ is a subobject
of a Polish abelian group, then $X$ is algebraically abelian. Hence, in
every short exact sequence with middle term $X$, both its kernel and
quotient are abelian. Thus both sides of the equivalence in Definition~\ref%
{Definition:thick} hold.

(2) Non-Archimedean Polish groups are closed under closed subgroups and
quotients by closed normal subgroups; see \cite[Section~2.4]%
{gao_invariant_2009}. They are furthermore closed under extensions by \cite[%
Exercise~2.4.1]{gao_invariant_2009}.

(3) The assertion follows from the CLI three-space theorem: if $N$ is a
closed normal subgroup of a Polish group $G$, then $G$ is CLI if and only if
both $N$ and $G/N$ are CLI \cite[Theorem~3.C.1]{becker_polish_1998}.

(4) Local compactness is a three-space property for topological groups; see
for example \cite[Section~3]{serre_extensions_1952}.

(5) This follows from standard results; see \cite[Th\'{e}or\`{e}me~3]%
{serre_extensions_1952}.

(6) Pro-Lie Polish abelian groups are closed under the operations defining a
regular subcategory and under extensions \cite[Theorem~3.17]%
{casarosa_homological_2026}. Since every subobject of an abelian group is
abelian, the cited thickness result in $\mathbf{PAb}$ also gives thickness
as a subcategory of $\mathbf{P}$ in the sense of Definition~\ref%
{Definition:thick}.

(7) Compact groups are closed under finite products, closed subgroups,
quotients by closed normal subgroups, and extensions; see, for example, \cite%
{hofmann_structure_2013}.

(8) Among locally compact groups, amenability is inherited by closed
subgroups and quotients and is a three-space property: if $N$ is a closed
normal subgroup of a locally compact group $G$, then $G$ is amenable if and
only if both $N$ and $G/N$ are amenable \cite[Lemma~5.5]%
{reid_wesolek_dense_2018}; see also \cite{paterson_amenability_1988}.
Together with (4), this proves that $\mathbf{AmenLCP}$ is thick.

(9) The class of elementary non-Archimedean (equivalently, totally
disconnected) locally compact second-countable groups is closed under closed
subgroups, quotients by closed normal subgroups, and extensions \cite%
{wesolek_elementary_2015}. Therefore $\mathbf{EnALCP}$ is thick.

(10) Subgroups and quotients of solvable groups are solvable, and an
extension of a solvable group by a solvable group is solvable; see, for
example, \cite{robinson_course_1982}. These algebraic facts imply all the
required closure properties for $\mathbf{SolP}$.

(11) Let $\mathcal{F}$ be an extension-closed variety of finite groups. Pro-$%
\mathcal{F}$ groups are closed under products, closed subgroups, quotients
by closed normal subgroups, and extensions \cite[Proposition~2.2.1]%
{ribes_profinite_2010}. Restricting to the compact Polish groups proves that 
$\mathbf{pro\text{-}\mathcal{F}P}$ is thick.

(12) It follows from the formula pertaining to dimensions of extensions from 
\cite[Theorem~2.1]{nagami_dimension-theoretical_1962}.

(13)--(16) These are thick subcategories of the quasi-abelian category $%
\mathbf{PAb}$ by \cite[Theorem~6.18(3), (9)--(12)]{lupini_applications_2025}%
. The original permanence results used there include \cite[Theorem~2.6(2)]%
{moskowitz_homological_1967} for compactly generated groups, \cite[%
Chapters~2--3]{armacost_structure_1981} for topological torsion and
topological $p$-groups, and \cite[Proposition~2.9]{hoffmann_homological_2007}
for groups of finite ranks. The same argument as in (6) shows that they are
thick as subcategories of $\mathbf{P}$ in the sense of Definition~\ref%
{Definition:thick}: a subobject in $\mathbf{P}$ of an abelian group is
abelian, so every relevant short exact sequence is a short exact sequence in 
$\mathbf{PAb}$.
\end{proof}

\begin{proposition}
\label{Proposition:non-thick}The following are \emph{regular subcategories}
of the category of Polish groups that are not thick:

\begin{enumerate}
\item $\mathbf{TSI}$;

\item $\mathbf{nATSI}$;

\item $\mathbf{proLieP}$;

\item $\mathbf{proLieTSI}$;

\item $\mathbf{Hilb}$, regarded as a subcategory of $\mathbf{PAb}$ and hence
of $\mathbf{P}$.

\item $\mathbf{Ban}$, regarded as a subcategory of $\mathbf{PAb}$ and hence $%
\mathbf{P}$.
\end{enumerate}
\end{proposition}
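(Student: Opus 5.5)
Each of the six items needs two verifications. First, the class is a regular subcategory: by the remark after \eqref{Eq:extension}, it suffices that it be closed under finite products, closed subgroups, and quotients by closed normal subgroups. Second, it is not thick: I will exhibit a short exact sequence $1\to N\to G\to H\to 1$ in $\mathbf{P}$ with $N,H$ in the class and $G$ a subobject of an object of the class, but with $G$ not in the class. The natural device is a semidirect product $G=N\rtimes H$ embedded, as a Polish subgroup with a finer topology, into some object of the class.

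For closure: TSI and non-Archimedean TSI are preserved by finite products, since the product of two-sided invariant (ultra)metrics is again such. They pass to closed subgroups by restricting the metric. For quotients, the induced quotient metric of a two-sided invariant metric is two-sided invariant and compatible; for $\mathbf{nATSI}$, the images of open normal subgroups form a basis of open normal subgroups. Pro-Lie Polish groups are closed under finite products, closed subgroups, and Hausdorff quotients by the Hofmann--Morris theory, and $\mathbf{proLieTSI}$ is the intersection of two such classes. For $\mathbf{Hilb}$ and $\mathbf{Ban}$, continuous homomorphisms are linear. Closed subgroups of separable Banach (Hilbert) spaces are closed additive subgroups, which need not be subspaces, so this is the delicate point. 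I would use the closed subgroup structure theorem: a closed subgroup of a Banach space that is Polish and a regular subobject is isomorphic to $V\times\mathbb{Z}^n$-type pieces. Because of this I suspect the intended reading treats $\mathbf{Hilb}$ and $\mathbf{Ban}$ via their subspaces and quotients by closed subspaces. There I would invoke that closed subspaces and quotients of Banach (Hilbert) spaces are Banach (Hilbert), and I would follow the paper's convention on which regular structure is inherited from $\mathbf{PAb}$.

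For non-thickness, I would use the following examples. For $\mathbf{TSI}$, $\mathbf{nATSI}$ and $\mathbf{proLieTSI}$, take a discrete group such as $\mathbb{Z}$ acting on a compact abelian group like $(\mathbb{Z}/2)^{\mathbb{Z}}$ by the shift. Then $G=(\mathbb{Z}/2)^{\mathbb{Z}}\rtimes\mathbb{Z}$ is locally compact, pro-Lie and non-Archimedean, but not TSI, since conjugation by $\mathbb{Z}$ does not preserve a neighborhood basis. Both $N$ and $H$ lie in every class involved. To see that $G$ is a subobject of a TSI group, I would embed it algebraically and continuously, for instance into a discrete countable group or into $G$ with the discrete topology, which is a Polish subgroup via a continuous injection from the discrete group. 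The subtlety is that subobjects are Polish subgroups with continuous inclusion, so I must embed $G$ as the \emph{domain} of a continuous injective homomorphism into a TSI object. That fails for the discrete topology going the wrong way. Instead I would embed $G$ into a compact group, for example the Bohr-type product $\prod$ of finite quotients, since $G$ is residually finite and profinite-by-$\mathbb{Z}$. A compact group is TSI, pro-Lie and, when profinite, non-Archimedean, and a continuous injective homomorphism from $G$ into a profinite group gives a subobject. For $\mathbf{proLieP}$, I would use instead an extension of Lie groups that is not pro-Lie yet embeds in a pro-Lie group. For $\mathbf{Hilb}$ and $\mathbf{Ban}$, I would take $0\to\mathbb{R}\to\mathbb{R}\times\mathbb{Z}\to\mathbb{Z}\to 0$ style examples, or an extension $0\to \ell^2\to X\to \ell^2\to0$ of Banach spaces with a non-Hilbert middle term (Enflo--Lindenstrauss--Pisier twisted Hilbert space, a subobject of a Banach space), showing $\mathbf{Hilb}$ is not thick. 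For $\mathbf{Ban}$, I would use an extension with a non-locally-convex middle term, such as the Ribe space with $\mathbb{R}$ kernel and $\ell^1$ quotient, embedded continuously into a Banach space.

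The main obstacle is this subobject condition: the middle term must continuously inject into a member of the class. That is the step I would check most carefully for each example, choosing examples whose middle term is residually in the class.
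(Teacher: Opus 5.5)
Your opening sentence names the right device: a kernel whose own Polish topology is strictly finer than the one induced by the ambient group. The example you then choose for $\mathbf{TSI}$, $\mathbf{nATSI}$ and $\mathbf{proLieTSI}$ cannot have this feature, because its kernel $N=(\mathbb{Z}/2)^{\mathbb{Z}}$ is compact.

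\emph{Why it fails.} Let $j\colon G\to T$ be a continuous injective homomorphism from $G=N\rtimes\mathbb{Z}$ into a TSI Polish group $T$. Then $j|_N$ is a homeomorphism onto its image. So the sets $j^{-1}(W)\cap N$, for $W$ a conjugation-invariant identity neighborhood of $T$, form a basis of shift-invariant identity neighborhoods of $N$. That makes the powers of the shift equicontinuous on $N$, which is exactly what you used to show $G$ is not TSI. Hence $G$ is not a subobject of any TSI group.

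\emph{The residual-finiteness repair does not help.} Only continuous finite quotients count. Every open normal subgroup of $G$ meets $N$ in an open shift-invariant subgroup. Such a subgroup contains some $\{x: x_i=0 \text{ for } |i|\le k\}$ together with all its shifts, hence all unit vectors, hence equals $N$. The same reasoning applies to the Lie quotients of a pro-Lie group, where the image of $N$ is compact and totally disconnected, hence finite. So every continuous homomorphism from $G$ to a pro-Lie group kills $N$.

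\emph{Further errors on the pro-Lie side.} $G$ is not pro-Lie at all: a non-Archimedean pro-Lie group has discrete Lie quotients, hence a basis of open normal subgroups, hence is TSI. Your fallback for $\mathbf{proLieP}$, an extension of Lie groups that is not pro-Lie, cannot exist, since $\mathbf{Lie}$ is closed under extensions in $\mathbf{P}$.

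\emph{What is missing.} You need an example in which the non-equicontinuity is visible only in the kernel's own topology. The paper takes $B=c_0(\mathbb{Z},\mathbb{Q}_p)$, on which the bilateral shift $\sigma$ is an isometry. It then takes the weighted subspace $K$ with $\lVert x\rVert_K=\sup_n p^{|n|}|x_n|_p$, on which $\sigma$ is a continuous automorphism with non-equicontinuous powers. Then $A=B\rtimes_\sigma\mathbb{Z}$ is non-Archimedean, TSI and pro-Lie, and $X=K\rtimes_\sigma\mathbb{Z}$ injects continuously into $A$. But $X$ is non-Archimedean and not TSI, hence not pro-Lie. This settles (1)--(4) with the single extension $1\to K\to X\to\mathbb{Z}\to 1$.

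Your example for (6) fails the subobject condition for a similar structural reason. In a nonsplit extension $0\to\mathbb{R}\to Z\to\ell_1\to 0$ such as Ribe's, every continuous linear functional on $Z$ vanishes on the kernel line. A functional that did not would give a bounded projection onto the line, and then $Z\cong\mathbb{R}\oplus\ell_1$ would be locally convex. Continuous additive maps between real topological vector spaces are linear, and the dual of a Banach space separates points. Hence every continuous homomorphism from $Z$ into an object of $\mathbf{Ban}$ kills the kernel, so $Z$ is not a subobject of any such object.

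This is why the paper uses the Kalton--Peck space $Z_1$, whose kernel is $\ell_1$. There is still no projection onto the kernel. But each coordinate functional $u_n(y,x)=y_n$ remains bounded, because $|\Omega_1(x)_n|\le e^{-1}\lVert x\rVert_1$. Therefore $z\mapsto(qz,(2^{-n}u_n(z))_n)$ injects $Z_1$ continuously into $\ell_1\oplus_1\ell_1$.

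For (5), the Enflo--Lindenstrauss--Pisier space is a legitimate substitute for the Kalton--Peck space $Z_2$. However, you need an injection into a Hilbert space, not a Banach space; the map $x\mapsto(2^{-n}f_n(x))_n$ for a countable norming family provides it. The suggestion $\mathbb{R}\times\mathbb{Z}$ is irrelevant, since $\mathbb{Z}\notin\mathbf{Hilb}$.

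Finally, for the regularity of $\mathbf{Hilb}$ and $\mathbf{Ban}$, the closure-under-closed-subgroups criterion does not apply. Your structure claim about closed subgroups of Banach spaces is also not correct. It suffices that finite limits computed in $\mathbf{P}$ are closed subspaces of finite products, and that surjective bounded operators are open with Hilbert (resp.\ Banach) quotients.
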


\begin{proof}
The closure properties needed for regularity are standard. For $\mathbf{TSI}$
it follows from standard results pertaining to compatible metrics on Polish
groups \cite[Section~2.2]{gao_invariant_2009}. The Closed Subgroup Theorem
and stability under products give the corresponding closure properties for
pro-Lie groups \cite[Chapter~3]{hofmann_lie_2007}. If $N$ is a closed normal
subgroup of a Polish pro-Lie group $G$, then $G/N$ is Polish, hence
complete; therefore the Quotient Theorem implies that $G/N$ is a pro-Lie
group \cite[Theorem~4.1]{hofmann_lie_2007}. Finite products and kernels of
bounded operators between separable real Hilbert spaces are again separable
real Hilbert spaces. Moreover, a surjective bounded operator is open and its
quotient is a Hilbert space. Thus $\mathbf{Hilb}$ is regular and its
inclusion in $\mathbf{PAb}$ is regular. The same argument, using the open
mapping theorem, shows that $\mathbf{Ban}$ and its inclusion in $\mathbf{PAb}
$ are regular.

(1)--(4) We give one counterexample to thickness that applies to the classes
(1), (2), (3), and (4) simultaneously. Fix a prime $p$, and consider the
non-Archimedean Banach space over $\mathbb{Q}_{p}$ 
\begin{equation*}
B=c_{0}(\mathbb{Z},\mathbb{Q}_{p}),\qquad \lVert x\rVert _{\infty
}=\sup_{n\in \mathbb{Z}}\lvert x_{n}\rvert _{p}.
\end{equation*}%
Define also the Banach subspace 
\begin{equation*}
K=\left\{ x\in B:\lim_{\lvert n\rvert \rightarrow \infty }p^{\lvert n\rvert
}\lvert x_{n}\rvert _{p}=0\right\},\qquad \lVert x\rVert _{K}=\sup_{n\in 
\mathbb{Z}}p^{\lvert n\rvert }\lvert x_{n}\rvert _{p}.
\end{equation*}%
Then $K$, endowed with $\lVert \cdot \rVert _{K}$, is a non-Archimedean TSI
Polish group, and the inclusion $K\rightarrow B$ is continuous and
injective. Let $\sigma $ be the bilateral shift on $B$. It is an isometry of 
$B$ and a continuous automorphism of $K$. Consequently, 
\begin{equation*}
A:=B\rtimes _{\sigma }\mathbb{Z}
\end{equation*}%
is a non-Archimedean TSI Polish group: the products of the $\lVert \cdot
\rVert _{\infty }$-balls in $B$ with $\{0\}$ form a basis of open normal
subgroups of $A$. In particular, $A$, $B$, and $K$ are pro-countable by \cite%
[Theorem~1.1]{gao_non-archimedean_2014} and \cite[Proposition~2.2]%
{ding_non-archimedean_2017}, hence pro-Lie.

On the other hand, put 
\begin{equation*}
X=K\rtimes _{\sigma }\mathbb{Z}.
\end{equation*}%
The inclusion $K\rightarrow B$ induces a monomorphism $X\rightarrow A$ in $%
\mathbf{P}$, and there is a split short exact sequence 
\begin{equation*}
1\longrightarrow K\longrightarrow X\longrightarrow \mathbb{Z}\longrightarrow
1.
\end{equation*}%
The group $X$ is non-Archimedean, but it is not TSI. Indeed, the family $%
(\sigma ^{m})_{m\in \mathbb{Z}}$ is not equicontinuous on $K$: if $e_{0}$ is
supported at $0$, then 
\begin{equation*}
\lVert \sigma ^{m}e_{0}\rVert _{K}=p^{m}\lVert e_{0}\rVert _{K}\qquad (m\geq
0).
\end{equation*}%
If $X$ were TSI, intersecting a basis of conjugation-invariant identity
neighborhoods of $X$ with $K$ would make this family equicontinuous, a
contradiction.

Finally, a non-Archimedean pro-Lie Polish group is necessarily TSI. Indeed,
the Pro-Lie Group Theorem provides a cofinal family of co-Lie closed normal
subgroups; the corresponding Lie quotients are non-Archimedean and hence
discrete, so these kernels form a basis of open normal subgroups. Thus $X$
is not pro-Lie. We have therefore exhibited a subobject $X$ of the object $A$
such that the kernel and quotient in the displayed short exact sequence
belong to each of the subcategories in (1)--(4), while $X$ does not.

(5) Kalton and Peck constructed the twisted Hilbert space $Z_{2}$ and a
short exact sequence of separable real Banach spaces 
\begin{equation*}
0\longrightarrow \ell _{2}\longrightarrow Z_{2}\longrightarrow \ell
_{2}\longrightarrow 0
\end{equation*}%
such that $Z_{2}$ is not isomorphic to a Hilbert space \cite[Theorem~6.1]%
{kalton_twisted_1979}. Since $Z_{2}$ is separable, choose a countable
norming family $(f_{n})_{n\geq 1}$ in its dual unit ball. The map 
\begin{equation*}
Z_{2}\longrightarrow \ell _{2},\qquad x\longmapsto \left(
2^{-n}f_{n}(x)\right) _{n\geq 1}
\end{equation*}%
is a continuous injective homomorphism, and hence exhibits $Z_{2}$ as a
subobject in $\mathbf{PAb}$ of an object of $\mathbf{Hilb}_{\mathbb{R}}$.
Thus the displayed short exact sequence has kernel and quotient in $\mathbf{%
Hilb}$, whereas its middle term is not. Together with the preceding example,
this is precisely the failure of Definition~\ref{Definition:thick} in the
first five cases.

(6) Consider the Kalton--Peck quasi-Banach space $Z_{1}$, obtained from the
construction in \cite[Section~4]{kalton_twisted_1979}; this notation is used
in \cite[Section~1.2]{cabello_sanchez_strictly_2012}. On the dense subspace $%
\ell _{1}\times c_{00}$ it is induced by the quasilinear map 
\begin{equation*}
\Omega _{1}(x)_{n}=x_{n}\log \frac{\lVert x\rVert _{1}}{\lvert x_{n}\rvert }
\end{equation*}%
(with value zero when $x_{n}=0$) and the quasinorm 
\begin{equation*}
\lVert (y,x)\rVert _{\Omega }=\lVert y-\Omega _{1}(x)\rVert _{1}+\lVert
x\rVert _{1}.
\end{equation*}%
Its completion fits into a short exact sequence of separable quasi-Banach
spaces 
\begin{equation*}
0\longrightarrow \ell _{1}\longrightarrow Z_{1}\overset{q}{\longrightarrow }%
\ell _{1}\longrightarrow 0,
\end{equation*}%
and the quotient map $q$ is strictly singular \cite[Section~3]%
{castillo_strictly_2002}; see also \cite[Section~1.2 and Theorem~2]%
{cabello_sanchez_strictly_2012}. Consequently, $Z_{1}$ is not locally
convex: otherwise it would be isomorphic to a Banach space, and the
surjection $q$ onto $\ell _{1}$ would admit a continuous linear right
inverse by the projectivity of $\ell _{1}$ in the Banach category \cite[%
Section~2.7]{cabello_sanchez_homological_2023}, contradicting strict
singularity. The nontriviality of the Kalton--Peck extension is also given
by \cite[Theorem~4.2(c)]{kalton_twisted_1979}; for a modern treatment, see 
\cite[Proposition~3.3.5 and Section~3.4]{cabello_sanchez_homological_2023}.
The case $p=1$ is essential here: for $1<p<\infty $ the corresponding space $%
Z_{p}$ is a Banach space \cite[Theorem~4.7]{kalton_twisted_1979}, and the
failure of local convexity as a three-space property at $\ell _{1}$ goes
back to \cite[Theorem~1]{ribe_examples_1979}. In particular, the additive
Polish group of $Z_{1}$ is not isomorphic to an object of $\mathbf{Ban}$,
since every continuous additive map between real topological vector spaces
is $\mathbb{R}$-linear.

Nevertheless, $Z_{1}$ is a subobject of an object of $\mathbf{Ban}$. Indeed,
let $u_{n}(y,x)=y_{n}$ on $\ell _{1}\times c_{00}$. The inequality $t\log
(1/t)\leq e^{-1}$ for $0\leq t\leq 1$ gives 
\begin{equation*}
\lvert u_{n}(y,x)\rvert \leq \lVert y-\Omega _{1}(x)\rVert _{1}+e^{-1}\lVert
x\rVert _{1}\leq \lVert (y,x)\rVert _{\Omega }.
\end{equation*}%
Thus each $u_{n}$ extends continuously to $Z_{1}$, and 
\begin{equation*}
J:Z_{1}\longrightarrow \ell _{1}\oplus _{1}\ell _{1},\qquad Jz=\left(
qz,\left( 2^{-n}u_{n}(z)\right) _{n\geq 1}\right) ,
\end{equation*}%
is continuous. It is injective because $qz=0$ places $z$ in the canonical
copy of $\ell _{1}$, on which the $u_{n}$ are the usual coordinate
functionals. Hence $Z_{1}$ is a subobject of a separable real Banach space,
while the kernel and quotient of the preceding short exact sequence are
Banach spaces and its middle term is not. This proves that $\mathbf{Ban}$ is
not thick.
\end{proof}

\subsection{Extensions}

Recall that a \emph{standard Borel group} is a group equipped with a
standard Borel structure for which multiplication and inversion are Borel;
see \cite{mackey_borel_1957}. We consider extensions 
\begin{equation}
1\longrightarrow M\longrightarrow P\overset{\pi }{\longrightarrow }%
G\longrightarrow 1
\end{equation}%
where $M$ and $G$ are Polish groups, $P$ is a standard Borel group, and the
structure maps are Borel homomorphisms. Suppose moreover that $\pi $ has a 
\emph{Borel right inverse} $s$ with $s(1)=1$. Put, for $x,y\in G$ and $m\in
M $, 
\begin{align}
\alpha _{x}(m)& =s(x)ms(x)^{-1}, \\
c(x,y)& =s(x)s(y)s(xy)^{-1}.
\end{align}%
Thus $\alpha _{x}\in \func{Aut}(M)$ and $c(x,y)\in M$. The pair $(\alpha ,c)$
is the \emph{factor system} associated with $s$. The following result is
essentially established in \cite[Theorem, p.~73]{brown_extensions_1971}:

\begin{theorem}[Brown]
\label{thm:brown-extension} Suppose that 
\begin{equation*}
1\longrightarrow M\xrightarrow{\iota}P\xrightarrow{\pi}G\longrightarrow 1
\end{equation*}%
is an exact sequence of standard Borel groups, where $M$ and $G$ are Polish.
Suppose, in addition, that $\pi $ admits a Borel right inverse. Then $P$
admits a unique Polish group topology inducing its given Borel structure.
With respect to this topology, $\iota $ is a topological embedding and $\pi $
is a continuous open homomorphism.
\end{theorem}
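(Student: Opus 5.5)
Via the Borel right inverse $s$, I identify $P$ with $M\times G$ as a Borel space. Each element of $P$ can be written uniquely as $\iota(m)s(x)$, so the Borel map $M\times G\rightarrow P$, $(m,x)\mapsto \iota (m)s(x)$ is a bijection; its inverse $p\mapsto (ps(\pi (p))^{-1},\pi (p))$ is Borel. By Souslin's theorem it is therefore a Borel isomorphism of standard Borel spaces. Transporting the group law gives
\begin{equation*}
(m,x)(n,y)=\bigl(m\,\alpha _{x}(n)\,c(x,y),\,xy\bigr),
\end{equation*}
where $\alpha :G\times M\rightarrow M$ and $c:G\times G\rightarrow M$ are Borel. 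I will endow $P$ with the product Polish topology of $M\times G$ only \emph{after} correcting the factor system. The product topology itself need not be a group topology, since $\alpha$ and $c$ are merely Borel.

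The first step is continuity of $\alpha$. Each $\alpha _{x}$ is a Borel automorphism of $M$, hence continuous (Pettis). The map $x\mapsto \alpha _{x}$ is, however, only a Borel map into $\func{Aut}(M)$ modulo inner automorphisms coming from $c$; the genuine action of $G$ is on $M$ only up to $\func{Inn}$. So I plan a different route: rather than proving continuity of the twisted product directly, I will find a Polish group topology on $P$ abstractly. Let $\tau$ be the topology generated by the following requirements: $\iota (M)$ is open-embedded in the sense that translates $p\,\iota (U)\,V$-sets form a base, for $U$ open in $M$ and $V=s(W)$-tubes. This is hard to control directly. Instead I use the standard Mackey--Weil/Banach trick: $P$ is a standard Borel group, and a standard Borel group admits a compatible Polish group topology if and only if there is a Polish topology on it making it a topological group. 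I verify the latter through the Baire-category criterion that a Borel group structure is Polishable once left translations and the quotient behave well.

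Concretely, the key steps are as follows. (i) Show that for each $p\in P$ the conjugation $m\mapsto p\,\iota (m)\,p^{-1}$ is a Borel automorphism of $M$, hence continuous, and that $P\times M\rightarrow M$, $(p,m)\mapsto pmp^{-1}$ is Borel separately in each variable. (ii) Form the Polish group $M\times M$ and consider the subgroup $Q=\{(\iota^{-1}(ps(\pi(p))^{-1}),\pi (p))\}$, seeking a realization of $P$ as a Polish subgroup of a Polish group. For that I embed $P$ into the Polish group $\func{Homeo}$-type object $\func{Aut}$ of the Polish space $M\times G$: $P$ acts on $M\times G\cong P$ by left translation, and each left translation is Borel. (iii) Apply the Becker--Kechris/Mackey theorem: a Borel action of a standard Borel group by Borel maps which, transported, is continuous in each element gives a Polish topology. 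Since left translation by $\iota (n)$ is $(m,x)\mapsto (nm,x)$, which is continuous, and left translation by $s(y)$ is $(m,x)\mapsto (\alpha _{y}(m)c(y,x),yx)$, continuity of the latter in $(m,x)$ is exactly the obstacle.

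The main obstacle is therefore producing a section $s$ for which $c$ is continuous, or at least continuous in the second variable, on a comeager set. My first attempt will use Baire category. Borel maps are continuous on a dense $G_{\delta}$ set, and the cocycle identity
\begin{equation*}
c(x,y)c(xy,z)=\alpha _{x}(c(y,z))c(x,yz)
\end{equation*}
lets one propagate continuity from a comeager set to everywhere: given $y$, write $y=ab$ with $a,b$ generic (Pettis). This yields a group topology in which multiplication is separately continuous. A separately continuous Polish group is a topological group by Ellis/Montgomery. Uniqueness follows because two Polish group topologies with the same Borel sets agree, since the identity is a Borel hence continuous homomorphism. Finally, $\iota$ is a closed embedding because its image is the Borel subgroup $\pi^{-1}(1)$, whose induced Polish topology agrees with $M$'s by uniqueness. $\pi$ is a Borel, hence continuous, surjective homomorphism between Polish groups, and is therefore open by the open mapping theorem.
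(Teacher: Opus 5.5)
Your identification of $P$ with $M\times G$ via a Borel section and the resulting Borel factor system $(\alpha,c)$ match the paper. So do your closing deductions: uniqueness, $\iota$ a closed embedding and $\pi$ open, all via automatic continuity. The gap is that the existence of a Polish group topology on $P$, which is the actual content of the theorem, is never established. The paper does not prove this from scratch either. After the same reduction it invokes \cite[Theorem, p.~73]{brown_extensions_1971}: every Borel nonabelian factor system of Polish groups is realized by a Polish topological extension. Your steps (ii)--(iii) do not replace this. The group of Borel automorphisms of $M\times G$ is not a Polish group in any useful sense. The ``Becker--Kechris/Mackey theorem'' you quote is not a result in that form. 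And the ``Mackey--Weil/Banach trick'' as you state it is circular.

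The strategy in your last paragraph cannot work even in principle. You aim to choose $s$ so that multiplication on $M\times G$ is separately continuous for the product topology. By the Montgomery theorem you invoke, that topology would then be a Polish group topology in which $s$ is a continuous global section of $\pi$, and such sections need not exist. Concretely, take $1\to\mathbb{Z}\to\mathbb{R}\to\mathbb{T}\to 1$ with $\mathbb{T}=\mathbb{R}/\mathbb{Z}$ written additively and $s$ normalized. The action $\alpha$ is trivial. Continuity of left translation $(m,x)\mapsto (n+m+c(y,x),\,y+x)$ forces $x\mapsto c(y,x)$ to be continuous from the connected space $\mathbb{T}$ into the discrete group $\mathbb{Z}$. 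It is therefore constant, equal to $c(y,0)=0$. Then $s$ is a homomorphic splitting $\mathbb{T}\to\mathbb{R}$, which is impossible since $\mathbb{T}$ has torsion and $\mathbb{R}$ does not.

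So no Borel section makes the cocycle even separately continuous, and the proposed Baire-category ``propagation from a comeager set to everywhere'' cannot be carried out. In general the Polish topology of $P$ is not the product topology of $M\times G$ transported along any Borel section. Producing it needs a genuinely different construction, and that is exactly what Brown's theorem supplies.
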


\begin{proof}
Choose a normalized Borel section $s:G\rightarrow P$. The Borel bijection 
\begin{equation*}
M\times G\longrightarrow P\text{,}\qquad (m,g)\longmapsto \iota (m)s(g),
\end{equation*}%
transports multiplication to 
\begin{equation*}
(m,g)(n,h)=(m\alpha _{g}(n)c(g,h),gh)\text{,}
\end{equation*}%
where 
\begin{equation*}
\alpha _{g}(n)=s(g)ns(g)^{-1}\text{,}\qquad c(g,h)=s(g)s(h)s(gh)^{-1}\text{.}
\end{equation*}%
The maps $(g,n)\mapsto \alpha _{g}(n)$ and $c$ are Borel and satisfy the
nonabelian factor-system identities 
\begin{equation*}
\alpha _{g}\alpha _{h}=\func{Inn}_{c(g,h)}\alpha _{gh},\qquad
c(g,h)c(gh,k)=\alpha _{g}(c(h,k))c(g,hk).
\end{equation*}%
It is proved in \cite[Theorem, p.~73]{brown_extensions_1971} that every such
Borel nonabelian factor system of Polish groups is represented by a Polish
topological extension. Transporting that topology to $P$ gives existence and
the stated properties of $\iota $ and $\pi $. The rest of the assertions
follow from the automatic continuity of Borel homomorphisms on Polish groups 
\cite[Theorem~9.10]{kechris_classical_1995}.
\end{proof}

\subsection{Borel-definable homomorphisms}

We now recall the notion of Borel-definable homomorphism between groups with
a Polish cover, as initially considered in \cite%
{bergfalk_definable_2024,bergfalk_definable_2024-1}; see also \cite%
{lupini_looking_2024}.

\begin{definition}
\label{Definition:homo-Polish-cover} Let $G/N$ and $H/M$ be groups with a
Polish cover, and let $f:G/N\rightarrow H/M$ be a group homomorphism. We say
that $f$ is:

\begin{itemize}
\item an \emph{airomorphism} if it admits a lift $G\rightarrow H$ which is a
continuous group homomorphism;

\item \emph{Borel-definable} if it admits a Borel lift $G\rightarrow H$;

\item \emph{continuously definable} if it admits a continuous lift $%
G\rightarrow H$;

\item a \emph{homomorphism with a Polish cover} if its lifted graph 
\begin{equation*}
\widehat{\Gamma }(f)=\{(g,h)\in G\times H:f(gN)=hM\}
\end{equation*}%
is a Polish subgroup of $G\times H$.
\end{itemize}
\end{definition}

The prefix airo- comes from the Greek verb \textalpha\textiota\textrho%
\textomega\ (\emph{air\={o}}, to lift). The following result can be seen as
a generalization of \cite[Theorem 4.6]{lupini_looking_2024} to Polish groups
that are not necessarily abelian.

\begin{proposition}
\label{prop:borel-graph} Let $\varphi :G/N\rightarrow H/M$ be a homomorphism
between groups with a Polish cover. The following conditions are equivalent:

\begin{enumerate}
\item $\varphi $ is Borel-definable;

\item $\varphi $ is a homomorphism with a Polish cover;

\item there exist a group with a Polish cover $Z/L$, an airomorphism $\psi
:Z/L\rightarrow H/M$, and a bijective airomorphism $\rho :Z/L\rightarrow G/N$
such that $\varphi =\psi \circ \rho ^{-1}$.
\end{enumerate}
\end{proposition}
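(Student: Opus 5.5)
I will prove the cycle of implications (1)$\Rightarrow$(2)$\Rightarrow$(3)$\Rightarrow$(1).

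\emph{(1)$\Rightarrow$(2).} Let $\hat\varphi:G\rightarrow H$ be a Borel lift of $\varphi$. The lifted graph is
\begin{equation*}
\widehat{\Gamma}(\varphi)=\{(g,h): h\in \hat\varphi(g)M\}.
\end{equation*}
It is a subgroup of $G\times H$ because $\varphi$ is a homomorphism. It contains $\{1\}\times M$ as a normal subgroup. The map
\begin{equation*}
\pi:\widehat{\Gamma}(\varphi)\rightarrow G,\qquad (g,h)\mapsto g,
\end{equation*}
is surjective with kernel $\{1\}\times M$, and it has the Borel right inverse $g\mapsto(g,\hat\varphi(g))$. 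After normalizing, we may assume $\hat\varphi(1)=1$. To apply Theorem~\ref{thm:brown-extension}, I must check that $\widehat{\Gamma}(\varphi)$ is a standard Borel group. Consider the Borel bijection
\begin{equation*}
M\times G\rightarrow \widehat{\Gamma}(\varphi),\qquad (m,g)\mapsto (g,m\hat\varphi(g)).
\end{equation*}
It is Borel as a map into $G\times H$, because the Borel structure of $M$ is inherited from $H$. An injective Borel image of a standard Borel space is Borel, so $\widehat{\Gamma}(\varphi)$ is a Borel subset of $G\times H$. It is therefore a standard Borel group with the group operations inherited from $G\times H$, and $M$ and $G$ are Polish.

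Theorem~\ref{thm:brown-extension} then gives a Polish group topology on $\widehat{\Gamma}(\varphi)$ inducing this Borel structure. The inclusion $\widehat{\Gamma}(\varphi)\rightarrow G\times H$ is a Borel homomorphism between Polish groups, hence continuous by \cite[Theorem~9.10]{kechris_classical_1995}. Thus $\widehat{\Gamma}(\varphi)$ is a Polish subgroup.

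\emph{(2)$\Rightarrow$(3).} Set $Z=\widehat{\Gamma}(\varphi)$ with its Polish topology, and $L=N\times M$. First, $L\subseteq Z$, since $(n,m)$ lies over the identity coset. Second, $L$ is a Polish normal subgroup of $Z$: it is Polish in $G\times H$ and the inclusion into $Z$ is Borel, hence continuous. The two coordinate projections $Z\rightarrow G$ and $Z\rightarrow H$ are continuous homomorphisms, so they induce airomorphisms $\rho:Z/L\rightarrow G/N$ and $\psi:Z/L\rightarrow H/M$. The map $\rho$ is surjective because $\varphi$ is defined everywhere. It is injective because $(g,h)$ with $g\in N$ forces $h\in M$. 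Finally, $\psi\rho^{-1}=\varphi$ holds by the definition of the lifted graph.

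\emph{(3)$\Rightarrow$(1).} Let $\tilde\rho:Z\rightarrow G$ and $\tilde\psi:Z\rightarrow H$ be continuous lifts. It suffices to find a Borel map $t:G\rightarrow Z$ with $\tilde\rho(t(g))\in gN$ for all $g$, for then $\tilde\psi\circ t$ is a Borel lift of $\varphi$.

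Consider the set
\begin{equation*}
S=\{(g,z)\in G\times Z: \tilde\rho(z)\in gN\}.
\end{equation*}
It is Borel, since $E^G_N$ is Borel. Bijectivity of $\rho$ gives two facts. First, every section $S_g$ is nonempty. Second, each nonempty section $S_g$ is a single coset of the subgroup
\begin{equation*}
\tilde\rho^{-1}(N)=L.
\end{equation*}
Here the identity $\tilde\rho^{-1}(N)=L$ uses injectivity of $\rho$.

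This uniformization is the main obstacle. The sections are $L$-cosets and $L$ is Polish, so I would pass to the Polish group $Z'=L\rtimes$-type extension $\tilde\rho^{-1}(N)\cdots$; more concretely, I would argue as follows.

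The map $\tilde\rho$ has range $\tilde\rho(Z)$ with $\tilde\rho(Z)N=G$. The subgroup $\tilde\rho(Z)$ is the continuous image of $Z$, hence a Polish subgroup of $G$, and $\tilde\rho(Z)\supseteq\tilde\rho(L)$. The quotient $Z/\ker\tilde\rho\cong\tilde\rho(Z)$ admits a Borel transversal, by the standard selector for closed subgroup cosets \cite{kechris_classical_1995}. Hence it suffices to Borel-select $x(g)\in\tilde\rho(Z)\cap gN$.

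The set
\begin{equation*}
\{(g,x): x\in \tilde\rho(Z)\cap gN\}
\end{equation*}
is Borel, and each of its sections is a coset of $\tilde\rho(Z)\cap N$. The group $\tilde\rho(Z)\cap N$ is Polish, being the intersection of two Polish subgroups. Sections that are cosets of a Polish subgroup in a Polish group are $G_\delta$ in the refined Polish topology. I would therefore use the Kechris--Miller/Arsenin--Kunugui-type uniformization for Borel sets with $\sigma$-compact or $G_\delta$ sections, applied after moving to the Polish group $E^G_N$ of Corollary~\ref{Corollary:continuous-conjugation}. Alternatively, I would apply Theorem~\ref{thm:brown-extension} to $Z$ directly to obtain the selection. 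I expect this selection step to need the most care.
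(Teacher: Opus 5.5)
Your (1)$\Rightarrow$(2) and (2)$\Rightarrow$(3) are correct and follow the paper. The only difference is that you get Borelness of $\widehat{\Gamma}(\varphi)$ from Lusin--Souslin, where the paper uses the identity $\widehat{\Gamma}(\varphi)=\{(g,h):\hat\varphi(g)^{-1}h\in M\}$; either works.

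The gap is in (3)$\Rightarrow$(1). You correctly reduce to finding a Borel $t:G\rightarrow Z$ with $\tilde\rho(t(g))\in gN$, but you never carry out this selection, and the tools you list do not provide it.

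\begin{itemize}
\item The sections of your sets are cosets of $L$ in $Z$, respectively of $\tilde\rho(Z)\cap N$ in $G$. In general these are neither $\sigma$-compact nor $G_{\delta}$. Already for $Z=G=\mathbb{R}^{\mathbb{N}}$, $L=N=c_{0}$ and $\tilde\rho=\mathrm{id}$, the sections are the cosets $g+c_{0}$, which are neither. So Arsenin--Kunugui does not apply.
\item The ``refined Polish topology'' in which all sections would simultaneously become $G_{\delta}$ is never specified.
\item Even nonempty closed sections do not guarantee a Borel uniformization: there are closed subsets of $\mathbb{N}^{\mathbb{N}}\times\mathbb{N}^{\mathbb{N}}$ with all sections nonempty and no Borel uniformization.
\item Appealing to Theorem~\ref{thm:brown-extension} is circular, since Brown's theorem takes a Borel section as input.
\end{itemize}

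The missing observation is that no uniformization theorem is needed, because the set $S=\{(g,z)\in G\times Z:\tilde\rho(z)\in gN\}$ you wrote down is itself a Polish group.

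\begin{itemize}
\item $S=(\mathrm{id}_G\times\tilde\rho)^{-1}(E_N^G)$, and $E_N^G$ is a Polish subgroup of $G\times G$ by Corollary~\ref{Corollary:continuous-conjugation}(2).
\item Hence $S$ is a Polish subgroup of $G\times Z$: it is the continuous injective image of the closed subgroup $\{(g,z,e):(g,\tilde\rho(z))=e\}$ of $G\times Z\times E_N^G$.
\item The first projection $S\rightarrow G$ is a continuous homomorphism, and it is surjective because $\rho$ is.
\item By the open mapping theorem and the Borel selector for cosets of its closed kernel, it has a Borel right inverse $g\mapsto(g,t(g))$. This is the same fact you already invoke for $Z\rightarrow\tilde\rho(Z)$; cf.\ \cite[Lemma~1]{brown_extensions_1971}.
\item Then $\tilde\psi\circ t$ is the desired Borel lift of $\varphi$.
\end{itemize}

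This is exactly the reduction behind the paper's ``without loss of generality $\tilde\rho$ is surjective''. The second projection identifies $S/(N\times L)$ with $Z/L$ via a bijective airomorphism, and $\rho$ then has the surjective continuous lift $S\rightarrow G$. A Borel right inverse of that lift, composed with the lift of $\psi$, finishes the proof.
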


\begin{proof}
(2)$\Rightarrow $(3) Put $Z=\widehat{\Gamma }(\varphi )$, endowed with a
Polish group topology witnessing that it is Polish, and put $L=N\times M$, a
Polish normal subgroup of $Z$. The coordinate projections induce
airomorphisms 
\begin{equation*}
\rho :Z/L\longrightarrow G/N\quad \text{and}\quad \psi :Z/L\longrightarrow
H/M.
\end{equation*}%
The homomorphism $\rho $ is bijective, its lift $p_{G}:Z\rightarrow G$ is
continuous and surjective, and $\varphi =\psi \circ \rho ^{-1}$.

(3)$\Rightarrow $(1) Without loss of generality, we can assume that $\rho $
has a surjective continuous homomorphic lift $\widetilde{\rho }:Z\rightarrow
G$, and $\psi $ has a continuous homomorphic lift $\widetilde{\psi }%
:Z\rightarrow H$. The map $\widetilde{\rho }$ is open and has a Borel right
inverse $s:G\rightarrow Z$ \cite[Lemma~1]{brown_extensions_1971}. Thus $%
\widetilde{\psi }\circ s$ is a Borel lift of $\varphi $.

(1)$\Rightarrow $(2) Let $F:G\rightarrow H$ be a Borel lift of $\varphi $.
Since $M$ is a Borel subgroup of $H$, the identity 
\begin{equation*}
\widehat{\Gamma }(\varphi )=\{(g,h)\in G\times H:F(g)^{-1}h\in M\}
\end{equation*}%
shows that $\widehat{\Gamma }(\varphi )$ is a Borel subgroup of $G\times H$
and hence a standard Borel group. Moreover, 
\begin{equation*}
1\longrightarrow M\longrightarrow \widehat{\Gamma }(\varphi )%
\xrightarrow{\,p_G\,}G\longrightarrow 1
\end{equation*}%
is an exact sequence of standard Borel groups and $g\mapsto (g,F(g))$ is a
Borel section of $p_{G}$. By \Cref{thm:brown-extension}, $\widehat{\Gamma }%
(\varphi )$ admits a Polish group topology inducing its given Borel
structure. Its inclusion in $G\times H$ is a Borel homomorphism and is
therefore continuous, so $\widehat{\Gamma }(\varphi )$ is a Polish subgroup
of $G\times H$.
\end{proof}

\subsection{Continuous lifts}

We now observe that in the case of non-Archimedean Polish groups, every
Borel-definable group homomorphism is automatically continuously definable.
We first isolate the topological lemma behind the regularization argument.
The proof is essentially contained in the proof of \cite[Proposition~4.6]%
{bergfalk_definable_2024}.

\begin{lemma}
\label{lem:two-factor} Let $G$ be a non-Archimedean Polish group and let $%
C\subseteq G$ be a dense $G_{\delta }$ set. There are continuous maps $%
a,b:G\rightarrow C$ such that 
\begin{equation*}
x=a(x)b(x)\qquad (x\in G).
\end{equation*}
\end{lemma}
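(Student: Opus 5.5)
We build $a$ and $b$ as limits of locally constant maps, using a neighborhood basis of open subgroups. Fix a compatible complete metric $d$ on $G$ and a decreasing basis $(V_n)_{n\ge 0}$ of open subgroups at the identity, with $V_0=G$. Write $C=\bigcap_n U_n$ with each $U_n$ open and dense, and assume $U_{n+1}\subseteq U_n$. Every coset space $G/V_n$ is countable and its cosets are clopen. So any map that is constant on left cosets of $V_n$ is continuous. We will inductively define maps $a_n:G\to G$, constant on the left cosets $xV_n$, together with shrinking "error groups", such that $a_n(x)\to a(x)\in C$ and $a_n(x)^{-1}x\to b(x)\in C$ uniformly in a suitable sense.

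The induction runs as follows. At stage $n$ we have $a_n$ constant on $V_{k_n}$-cosets, and we also keep a nonempty open set $O_n(x)$ depending only on the coset $xV_{k_n}$. This set is contained in $U_n\cap B_d(a_{n-1}(x),2^{-n})$. We also keep the requirement that $a_n(x)^{-1}x$ lies in an open set $W_n(x)$ contained in $U_n$ with small diameter, nested in $W_{n-1}(x)$.

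To pass to stage $n+1$, fix one coset $D=xV_{k_n}$. For every $y\in D$ we want $a'$ with $a'\in O_n$, $a'\in U_{n+1}$, and $a'^{-1}y\in W_n\cap U_{n+1}$. The set of admissible $a'$ is $O_n\cap U_{n+1}\cap y(W_n\cap U_{n+1})^{-1}$, and it is open. We must show it is nonempty, and moreover that a single choice works for all $y$ in a smaller coset $yV_{k_{n+1}}$. Note that $a_n(y)$ itself lies in $O_n$ and $a_n(y)^{-1}y\in W_n$. By density of $U_{n+1}$, we can perturb $a_n(y)$ slightly inside $O_n\cap U_{n+1}$ and then perturb further so that the complementary factor lands in $U_{n+1}$. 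Concretely, the set
$$\{g\in O_n: g\in U_{n+1},\ g^{-1}y\in W_n\cap U_{n+1}\}$$
is the intersection of two open sets: a dense open subset of $O_n$, and the preimage of a dense open subset of $W_n$ under the homeomorphism $g\mapsto g^{-1}y$. The latter is nonempty near $a_n(y)$, so the intersection is open and nonempty. Having chosen such a $g$, continuity of multiplication gives an open subgroup $V_{k_{n+1}}$ for which $g^{-1}y'$ stays in the same open set for all $y'\in yV_{k_{n+1}}$. Since there are countably many cosets, we pick $g$ per coset, set $a_{n+1}$ equal to $g$ on it, and shrink $O_{n+1},W_{n+1}$ to small-diameter open neighborhoods of $g$ and of $g^{-1}y'$ respectively. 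Here we use that $V_{k_{n+1}}$ is taken small enough that all the right translates $g^{-1}y'$ fit in one small ball. Choose $k_{n+1}$ uniformly per coset; this is possible because a sub-coset refinement by a smaller subgroup is still a partition into clopen pieces. Alternatively, allow the level to depend on the coset by using a clopen partition refining the previous one.

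Finally, $a(x)=\lim a_n(x)$ exists by the $2^{-n}$ control. It lies in $\bigcap_n \overline{O_n}\subseteq C$, provided we arrange $\overline{O_{n+1}}\subseteq O_n\cap U_{n+1}$, which is possible by shrinking. Similarly $b(x)=\lim a_n(x)^{-1}x\in C$, and $a(x)b(x)=x$. Continuity holds because on each clopen piece at stage $n$ the values of $a$ are within $2^{-n}$ of each other. The main obstacle is the uniformity step: we need a single $g$ to work simultaneously for a whole clopen piece, while keeping $g^{-1}y'$ inside a nested small open set that is dense in $U$. We handle this by choosing $g$ first and then refining the clopen partition, which is exactly where non-Archimedeanity, in the form of clopen partitions at arbitrary scale, is used. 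Completeness of the metric on $G$, combined with nested closed balls of vanishing diameter for both factors, guarantees that the limits exist.
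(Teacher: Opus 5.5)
Your proposal is correct and is essentially the paper's argument. Both build, by recursion over clopen partitions (using only the zero-dimensionality of $G$), nested small open sets attached to each piece, pick a point in a fiberwise dense open set, refine the partition, and pass to the limit by completeness. The only cosmetic difference is the bookkeeping: the paper tracks a single point $y(x)$ through the open sets $A_n=\{(x,y): xy\in U_n,\ y^{-1}\in U_n\}$ and sets $a(x)=xy(x)$, $b(x)=y(x)^{-1}$, whereas you track $a(x)$ and $a(x)^{-1}x$ with two families $O_n$, $W_n$.

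Of your two options for the refinement step, rely on the second: let the level vary within a piece, i.e.\ take a clopen refinement of the cover by the cosets on which each chosen $g$ works, exactly as the paper does. A single level for a whole, possibly noncompact, previous coset is not justified.
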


\begin{proof}
Write $C=\bigcap_{n}D_{n}$, where each $D_{n}$ is dense and open. In $%
G\times G$ consider 
\begin{equation*}
A_{n}=\{(x,y):xy\in D_{n}\text{ and }y^{-1}\in D_{n}\}.
\end{equation*}%
The set $A_{n}$ is open, and for every $x\in G$ its vertical section is 
\begin{equation*}
(A_{n})_{x}=x^{-1}D_{n}\cap D_{n}^{-1},
\end{equation*}%
which is dense and open in $G$.

Fix a complete compatible ultrametric $d$ on the underlying space of $G$. We 
% TODO [exposition]: the indexing in this recursion is inconsistent -- the
% construction is initialised at $\mathcal{P}_{-1}$, the inductive step is
% phrased as reaching "level $n-1$", and condition (3) mixes levels $n$ and
% $n+1$. The construction is fine; the bookkeeping should be made uniform.
recursively construct clopen partitions $\mathcal{P}_n$ of $G$ and, for each 
$P\in\mathcal{P}_n$, a nonempty open set $V_P\subseteq G$ such that:

\begin{enumerate}
\item $\mathcal{P}_{n+1}$ refines $\mathcal{P}_n$;

\item if $P\in \mathcal{P}_{n+1}$ is contained in $Q\in \mathcal{P}_{n}$,
then $\overline{V}_{P}\subseteq V_{Q}$;

\item $\func{diam}(V_{P})\leq 2^{-n}$ and 
\begin{equation*}
P\times \overline{V}_{P}\subseteq \bigcap_{k\leq n}A_{k}.
\end{equation*}
\end{enumerate}

Initialize the construction with $\mathcal{P}_{-1}=\{G\}$ and $V_G=G$.
Suppose it has reached level $n-1$. For every $x$ in a member $Q$ of $%
\mathcal{P}_{n-1}$, the set 
\begin{equation*}
V_Q\cap\bigcap_{k\leq n}(A_k)_x
\end{equation*}
is nonempty and open. Choose a point in this set and then sufficiently small
open neighborhoods of $x$ and of the chosen point so that the product of
their closures is contained in $\bigcap_{k\leq n}A_k$, with the second
closure also contained in $V_Q$. A clopen partition refining the resulting
open cover and $\mathcal{P}_{n-1}$ exists because $G$ is zero-dimensional.
Shrinking the neighborhoods in the second coordinate gives the required sets 
$V_P$.

For $x\in G$, let $P_{n}(x)$ be the member of $\mathcal{P}_{n}$ containing $%
x $. Completeness and (1)--(3) give a unique point 
\begin{equation*}
y(x)\in \bigcap_{n}\overline{V}_{P_{n}(x)}.
\end{equation*}%
The diameter condition and the fact that the $P_{n}(x)$ are clopen show that 
$y:G\rightarrow G$ is continuous, and its graph is contained in $%
\bigcap_{n}A_{n}$. Set $a(x)=xy(x)$ and $b(x)=y(x)^{-1}$. The definition of
the sets $A_{n}$ gives $a(x),b(x)\in C$, while $a(x)b(x)=x$. Both maps are
continuous.
\end{proof}

\begin{proposition}
\label{prop:continuous-lift} Let $G/N$ and $H/M$ be groups with a Polish
cover, where $G$ is non-Archimedean. Suppose also that $f:G/N\longrightarrow
H/M$ is a group homomorphism. Then the following assertions are equivalent:

\begin{enumerate}
\item $f$ is a Borel-definable homomorphism;

\item $f$ has a continuous lift $\varphi :G\rightarrow H$.
\end{enumerate}
\end{proposition}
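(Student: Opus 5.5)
The implication (2)$\Rightarrow$(1) is immediate, since a continuous lift is in particular Borel. For (1)$\Rightarrow$(2), I will regularize a Borel lift using Baire category and \Cref{lem:two-factor}.

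Let $F:G\rightarrow H$ be a Borel lift of $f$. The plan is to produce a dense $G_{\delta }$ set $C\subseteq G$ on which $F$ is continuous. Borel maps between Polish spaces are Baire measurable, so such a $C$ exists: $F$ is continuous on a comeager set, which we may shrink to a dense $G_{\delta }$. Apply \Cref{lem:two-factor} to obtain continuous maps $a,b:G\rightarrow C$ with $x=a(x)b(x)$ for all $x\in G$. Define
\begin{equation*}
\varphi (x)=F(a(x))F(b(x)).
\end{equation*}
Then $\varphi $ is continuous. Indeed, $a$ and $b$ take values in $C$ and are continuous, $F|_{C}$ is continuous, and multiplication in $H$ is continuous.

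It remains to check that $\varphi $ is a lift of $f$. Since $F$ is a lift and $f$ is a homomorphism,
\begin{equation*}
\varphi (x)M=F(a(x))M\cdot F(b(x))M=f(a(x)N)f(b(x)N)=f(a(x)b(x)N)=f(xN).
\end{equation*}
Here we use that $M$ is normal in $H$, so that multiplication of cosets is well defined. Hence $\varphi $ is a continuous lift.

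The main obstacle has already been isolated in \Cref{lem:two-factor}: we need the factorization $x=a(x)b(x)$ to be continuous in $x$ and to land inside $C$. This is exactly where non-Archimedeanity of $G$ enters, through the clopen partitions in that proof. The only remaining care point is choosing $C$ as a dense $G_{\delta }$ set on which the restriction of $F$ is continuous. I expect this to follow from the standard fact that a Borel function is continuous on a dense $G_{\delta }$ subset, for instance \cite[Theorem~8.38]{kechris_classical_1995}.
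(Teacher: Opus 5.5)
Your proposal is correct and matches the paper's proof essentially step for step: a dense $G_{\delta}$ set $C$ on which the Borel lift is continuous (via \cite[Theorem~8.38]{kechris_classical_1995}), the factorization from \Cref{lem:two-factor}, the lift $\varphi(x)=F(a(x))F(b(x))$, and the same coset computation. Your explicit remarks that (2)$\Rightarrow$(1) is trivial and that normality of $M$ justifies multiplying cosets are points the paper leaves implicit.
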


\begin{proof}
A Borel map between Polish spaces is continuous after restriction to a dense 
$G_{\delta }$ set \cite[Theorem~8.38]{kechris_classical_1995}. Let $%
\psi:G\rightarrow H$ be a Borel lift of $f$, and choose such a set $%
C\subseteq G$ for $\psi$. Apply \Cref{lem:two-factor} and define 
\begin{equation*}
\varphi (x)=\psi (a(x))\psi (b(x)).
\end{equation*}%
This is continuous, since $a$ and $b$ take their values in $C$ and $\psi
|_{C}$ is continuous. Moreover, 
\begin{align*}
\varphi (x)M& =\psi (a(x))M\,\psi (b(x))M \\
& =f(a(x)N)f(b(x)N) \\
& =f(xN).
\end{align*}%
Thus $\varphi$ is a continuous lift of $f$.
\end{proof}

Note that the defect 
\begin{equation*}
d_{\varphi }(x,y)=\varphi (x)\varphi (y)\varphi (xy)^{-1}
\end{equation*}%
takes values in $M$, but \Cref{prop:continuous-lift} does not make the map $%
d_{\varphi }:G\times G\rightarrow M$ continuous with respect to the Polish
topology witnessing that $M$ is Polish.

\subsection{Exact completions}

Now let $\mathbf{PC}$ be the category whose objects are groups with a Polish
cover and whose morphisms are Borel-definable homomorphisms. As a direct
consequence of \Cref{prop:borel-graph} we obtain:

\begin{theorem}
\label{thm:main-completion} There is an equivalence of categories 
\begin{equation*}
\mathrm{Ex}(\mathbf{P})\simeq \mathbf{PC}
\end{equation*}%
mapping each Polish group $G$ to the group with a Polish cover $G/N$ where $%
N $ is the trivial subgroup of $G$.
\end{theorem}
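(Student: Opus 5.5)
We plan to use the explicit description of $\mathrm{Ex}(\mathbf{P})$ via $\mathrm{Rel}(\mathbf{P})$, and to build a functor $\Phi:\mathrm{Ex}(\mathbf{P})\rightarrow\mathbf{PC}$ which we then show is essentially surjective and fully faithful. An object of $\mathrm{Ex}(\mathbf{P})$ is a pair $(G,R)$, where $R$ is an internal equivalence relation on $G$. This means $R$ is a Polish subgroup of $G\times G$ that is, set-theoretically, an equivalence relation. We first check that such an $R$ is the coset relation $E_{N}^{G}$ of $N=\{g:(g,1)\in R\}$. Because $R$ is a subgroup containing the diagonal, $N$ is normal and $(g,h)\in R$ iff $gh^{-1}\in N$. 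Moreover, $N$ is the preimage of $R$ under the continuous map $g\mapsto(g,1)$, so it is Borel, and it is Polish since it is a closed subgroup of the Polish group $R\cap(G\times\{1\})$. We set $\Phi(G,R)=G/N$. Conversely, for any group with a Polish cover $G/N$, Corollary~\ref{Corollary:continuous-conjugation}(2) shows that $E_{N}^{G}$ is a Polish subgroup of $G\times G$, hence an internal equivalence relation in $\mathbf{P}$. This gives essential surjectivity, and it is in fact a bijection on objects up to the obvious identifications.

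Next we treat morphisms. A morphism $(G,E_{N}^{G})\rightarrow(H,E_{M}^{H})$ is a Polish subgroup $T\leq G\times H$ satisfying the four relational conditions; we translate these pointwise. The condition $TR=T=ST$ says that $T$ is saturated, i.e.\ $T\cdot(N\times M)=T$. Totality $R\leq T^{\circ}T$ together with saturation says that the projection $T\rightarrow G$ is surjective. Single-valuedness $TT^{\circ}\leq S$ says that $(g,h),(g,h')\in T$ implies $h^{-1}h'\in M$. The one point to check carefully is that relational composition in $\mathbf{P}$ is computed as a set-theoretic composite. Composition takes the regular image of the pullback, and regular images of continuous homomorphisms are the set-theoretic images equipped with the quotient Polish topology, by Proposition~\ref{prop:regularity} and the open mapping theorem. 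Hence these inequalities are genuinely pointwise statements. We conclude that $T$ is exactly the lifted graph $\widehat{\Gamma}(f)$ of a unique group homomorphism $f:G/N\rightarrow H/M$, and that $T$ being a subobject means precisely that $f$ is a homomorphism with a Polish cover. By Proposition~\ref{prop:borel-graph} ((1)$\Leftrightarrow$(2)), such $f$ are exactly the Borel-definable homomorphisms, and every Borel-definable $f$ arises from $T=\widehat{\Gamma}(f)$. This gives a bijection on hom-sets.

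Finally, we verify functoriality. The identity $R=E_{N}^{G}$ is $\widehat{\Gamma}(\mathrm{id})$. The set-theoretic composite of $\widehat{\Gamma}(g)$ and $\widehat{\Gamma}(f)$ is $\widehat{\Gamma}(g\circ f)$, and by the remark on regular images this composite is also the composite in $\mathrm{Rel}(\mathbf{P})$. On $\mathbf{P}$ itself, $G\mapsto(G,\Delta_G)$ corresponds to $G/\{1\}$, and graphs of continuous homomorphisms correspond to the homomorphisms themselves, so the equivalence has the form stated in Theorem~\ref{thm:main-completion}. The main obstacle is already handled by Brown's theorem inside Proposition~\ref{prop:borel-graph}, namely that Borel lifts give Polish graphs. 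The remaining delicate point is the identification of subobjects with Polish subgroups and of regular images with set-theoretic images, which we would state explicitly.
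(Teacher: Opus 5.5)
Your proposal is correct and takes essentially the same approach as the paper. You identify internal equivalence relations with coset relations $E_N^G$ via Corollary~\ref{Corollary:continuous-conjugation}, translate the four relational conditions pointwise so that morphisms become exactly lifted graphs $\widehat{\Gamma}(f)$ that are Polish subgroups, and conclude with Proposition~\ref{prop:borel-graph}; you also make explicit that relational composition and inclusion of subobjects in $\mathbf{P}$ are set-theoretic, which the paper leaves implicit. One small slip: $N$ is not a closed subgroup of $R\cap(G\times\{1\})$. Rather, $R\cap(G\times\{1\})$ is a closed subgroup of $R$ (the kernel of the second projection), and the first projection maps it continuously and injectively onto $N$; that is why $N$ is Polish.
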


\begin{proof}
As observed above, the objects of $\mathrm{Ex}\left( \mathbf{P}\right) $,
i.e., the equivalence relations in $\mathbf{P}$, can be identified as groups
with a Polish cover. Consider now groups with a Polish cover $G/N$ and $H/M$%
, corresponding to coset relations $E_{N}^{G}$ over $G$ and $E_{M}^{H}$ over 
$H$. A morphism $G/N\rightarrow H/M$ in $\mathrm{Ex}\left( \mathbf{P}\right) 
$ is a relation $T$ from $G$ to $H$ in $\mathbf{P}$ such that 
\begin{equation*}
TE_{N}^{G}=T=E_{M}^{H}T,\qquad E_{N}^{G}\leq T^{\circ }T,\qquad TT^{\circ
}\leq E_{M}^{H}.
\end{equation*}%
In particular, $T$ is a Polish subgroup of $G\times H$. The displayed
conditions define a unique map 
\begin{equation*}
f_{T}:G/N\longrightarrow H/M,\qquad f_{T}(gN)=hM\quad\text{whenever }%
(g,h)\in T.
\end{equation*}%
Indeed, totality gives the existence of $h$, while compatibility and
single-valuedness give independence of all choices. Since $T$ is a subgroup, 
$f_{T}$ is a homomorphism, and saturation gives $T=\widehat{\Gamma }(f_{T})$%
. Conversely, the lifted graph of any homomorphism $f:G/N\rightarrow H/M$ is
an entire functional relation as soon as it is a Polish subgroup of $G\times
H$.

By \Cref{prop:borel-graph}, these are precisely the Borel-definable
homomorphisms. Relational identities and composition correspond to identity
homomorphisms and composition, since the relational composite of two lifted
graphs is the lifted graph of the composite homomorphism. This yields the
claimed equivalence. Under the canonical embedding, $G$ is sent to $G/N$
where $N$ is the trivial subgroup of $G$.
\end{proof}

\begin{corollary}
The category $\mathbf{PC}$ of groups with a Polish cover is an exact
homological category.
\end{corollary}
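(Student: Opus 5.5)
The plan is to deduce the corollary formally from \Cref{thm:main-completion} together with general facts about exact completions recalled in Section~\ref{Section:regular}; essentially no new analysis is needed.

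\textbf{Step 1: $\mathbf{P}$ is homological.} By \Cref{prop:regularity}, $\mathbf{P}$ is regular and pointed. For protomodularity it suffices, in a pointed category with finite limits, to verify the Split Short Five Lemma. So consider a diagram as in that lemma, with $\theta :A^{\prime }\rightarrow A$ a continuous homomorphism compatible with the kernels $K$ and the splittings over $B$. Every element of $A^{\prime }$ can be written uniquely as $\kappa ^{\prime }(k)\beta ^{\prime }(b)$, and similarly in $A$. Hence $\theta $ is a bijection, by the purely group-theoretic Split Short Five Lemma. A bijective continuous homomorphism between Polish groups is a topological isomorphism, by the Open Mapping Theorem (equivalently, by automatic continuity of Borel homomorphisms \cite[Theorem~9.10]{kechris_classical_1995} applied to $\theta ^{-1}$). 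Thus $\theta $ is an isomorphism in $\mathbf{P}$, and $\mathbf{P}$ is homological by \Cref{Definition:homological}.

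\textbf{Step 2: transfer to the completion.} By \cite[Proposition~5.2]{gran_semi-localizations_2016}, recalled in the subsection on exact completions, the exact completion of a homological category is an exact homological category. Thus $\mathrm{Ex}(\mathbf{P})$ is exact and homological. Pointedness, regularity, protomodularity and exactness are all invariant under equivalence of categories. Therefore \Cref{thm:main-completion} transports these properties to $\mathbf{PC}$.

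\textbf{Main obstacle.} The only genuinely non-formal point is Step 1. There one must make sure that the kernels and splittings in $\mathbf{P}$ are the expected ones: the kernel of $\alpha $ is the closed normal subgroup $\alpha ^{-1}(1)$ with the subspace topology. With that settled, the open mapping argument goes through.

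Optionally, as a sanity check, one could also describe the structure of $\mathbf{PC}$ directly through the identification in \Cref{thm:main-completion}. The zero object is the trivial group. Regular epimorphisms should be the surjective Borel-definable homomorphisms. Kernels should be $\varphi ^{-1}(1)=K/N$ with $K$ the projection of $\widehat{\Gamma }(\varphi )\cap (G\times M)$, which is Polish by Brown's theorem. These descriptions are not needed for the corollary itself.
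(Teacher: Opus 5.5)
Your proposal is correct and follows essentially the same route as the paper. The paper obtains the corollary by combining three ingredients: $\mathbf{P}$ is homological (pointed and regular by \Cref{prop:regularity}, protomodular via the Split Short Five Lemma); then \cite[Proposition~5.2]{gran_semi-localizations_2016}; then transport along the equivalence of \Cref{thm:main-completion}. The only difference is that you spell out the verification of the Split Short Five Lemma (algebraic bijectivity of $\theta$ plus the Open Mapping Theorem), which the paper merely asserts.
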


\subsection{Regular subcategories}

Let $\mathcal{S}$ be a \emph{thick subcategory} of $\mathbf{P}$. Recall that 
$\mathcal{S}$ is thus the full subcategory spanned by a class of Polish
groups that is (essentially) closed under finite products, closed subgroups,
and quotients by closed normal subgroups, and closed under extensions in the
restricted sense of \Cref{Definition:thick}: whenever $1\rightarrow
N\rightarrow G\rightarrow H\rightarrow 1$ is an extension with $N$ and $H$
in $\mathcal{S}$ and $G$ is a subobject of an object of $\mathcal{S}$, then $%
G$ is in $\mathcal{S}$. By definition, an $\mathcal{S}$-group with a Polish
cover is a group with a Polish cover $G/N$ where both $G $ and the coset
relation of $N$ in $G$ are in $\mathcal{S}$. Since $\mathcal{S}$ is thick,
this is actually equivalent to the assertion that $G$ and $N$ are in $%
\mathcal{S}$. Indeed, suppose that $G$ and $N$ are in $\mathcal{S}$. By %
\Cref{Corollary:continuous-conjugation}, $E_{N}^{G}$ is a Polish subgroup of 
$G\times G$, and $G\times G$ is in $\mathcal{S}$; since the projection $%
E_{N}^{G}\rightarrow G$ is a regular epimorphism with kernel $N$, thickness
applied to the short exact sequence%
\begin{equation*}
1\longrightarrow N\longrightarrow E_{N}^{G}\longrightarrow G\longrightarrow 1
\end{equation*}%
yields $E_{N}^{G}\in \mathcal{S}$. Conversely, if $G$ and $E_{N}^{G}$ are in 
$\mathcal{S}$, the same short exact sequence yields $N\in \mathcal{S}$. The
following is therefore an immediate consequence of %
\Cref{Proposition:completion-thick} and \Cref{thm:main-completion}:

\begin{theorem}
\label{Theorem:main-completion-thick} Let $\mathcal{S}$ be a thick
subcategory of the category $\mathbf{P}$ of Polish groups. Then the
equivalence $\mathrm{Ex}\left( \mathbf{P}\right) \simeq \mathbf{PC}$
restricts to an equivalence between $\mathrm{Ex}\left( \mathcal{S}\right) $
and the full subcategory of $\mathbf{PC}$ spanned by $\mathcal{S}$-groups
with a Polish cover.
\end{theorem}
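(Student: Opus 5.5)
Proposition~\ref{Proposition:completion-thick} gives a functor $\mathrm{Ex}(\mathcal{S})\rightarrow \mathrm{Ex}(\mathbf{P})$ that is an equivalence onto an exact subcategory. Composing it with the equivalence $\mathrm{Ex}(\mathbf{P})\simeq \mathbf{PC}$ of \Cref{thm:main-completion}, the statement reduces to identifying the essential image with the full subcategory of $\mathcal{S}$-groups with a Polish cover. Fullness and faithfulness are already supplied by Proposition~\ref{Proposition:completion-thick}, since its image is a full (exact, hence regular) subcategory. So the only work lies at the level of objects.

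I first make the functor explicit. It is induced by the universal property applied to $\mathcal{S}\rightarrow \mathbf{P}\rightarrow \mathrm{Ex}(\mathbf{P})$, and both completions admit the relational description of \cite{carboni_regular_1998}. Therefore it sends an object $(X,R)$ of $\mathrm{Ex}(\mathcal{S})$ to $(X,R)$ regarded in $\mathrm{Ex}(\mathbf{P})$. The inclusion $\mathcal{S}\rightarrow\mathbf{P}$ is regular, so it preserves the relevant finite limits and images, and an internal equivalence relation in $\mathcal{S}$ remains one in $\mathbf{P}$. Under \Cref{thm:main-completion}, an internal equivalence relation $R$ on $G\in\mathcal{S}$ corresponds to the group with a Polish cover $G/N$, where $N$ is the $R$-class of the identity and $R=E_{N}^{G}$. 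Here both $G$ and $R=E_N^G$ lie in $\mathcal{S}$, so $G/N$ is an $\mathcal{S}$-group with a Polish cover.

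Conversely, let $G/N$ be an $\mathcal{S}$-group with a Polish cover. Then $G\in\mathcal{S}$ and $E_{N}^{G}\in\mathcal{S}$. By \Cref{Corollary:continuous-conjugation}, $E_N^G$ is a Polish subgroup of $G\times G$, so the inclusion is a monomorphism in $\mathbf{P}$ between objects of $\mathcal{S}$. Since $\mathcal{S}$ is full, this is a relation in $\mathcal{S}$. Reflexivity, symmetry and transitivity are witnessed by continuous homomorphisms between objects of $\mathcal{S}$. The fiber product $E\times_G E$ is computed identically in $\mathcal{S}$ and in $\mathbf{P}$, because the inclusion preserves finite limits. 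Hence $(G,E_N^G)$ is an object of $\mathrm{Ex}(\mathcal{S})$ whose image is $G/N$. Because membership in $\mathcal{S}$ is replete, the essential image is closed under isomorphism in $\mathbf{PC}$, and it is exactly the class of $\mathcal{S}$-groups with a Polish cover. The remark preceding the theorem lets us phrase this equivalently via $G,N\in\mathcal{S}$.

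The main obstacle is justifying Proposition~\ref{Proposition:completion-thick} itself, which the paper states by analogy with \cite{gran_semi-localizations_2016}. If that needs checking, the delicate point is fullness. A morphism in $\mathrm{Ex}(\mathbf{P})$ between objects coming from $\mathcal{S}$ is a relation $T\leq G\times H$ that is a priori only in $\mathbf{P}$. Here $T$ is a subobject of $G\times H\in\mathcal{S}$ and fits into the short exact sequence $1\rightarrow M\rightarrow T\rightarrow G\rightarrow 1$. Its kernel is $M$, the identity class of $E_{M}^{H}$, which lies in $\mathcal{S}$ by the preceding remark. The projection $T\rightarrow G$ is surjective by totality, hence a regular epimorphism. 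Thickness then gives $T\in\mathcal{S}$, so $T$ is a morphism of $\mathrm{Ex}(\mathcal{S})$. This is precisely where the restricted extension-closure in \Cref{Definition:thick} is used.
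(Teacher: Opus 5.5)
Your proposal is correct and follows the paper's route: you compose the embedding of Proposition~\ref{Proposition:completion-thick} with \Cref{thm:main-completion} and match objects of $\mathrm{Ex}(\mathcal{S})$, i.e.\ pairs $(G,E_N^G)$ with $G,E_N^G\in\mathcal{S}$, with $\mathcal{S}$-groups with a Polish cover. Your thickness argument placing the lifted graph $T$ in $\mathcal{S}$ is a useful explicit check of the fullness that the paper leaves to the cited analogy.

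One slip, which does not affect the result: the essential image in $\mathbf{PC}$ is only the closure of that class under $\mathbf{PC}$-isomorphism, not the class itself. For instance, with $\mathcal{S}=\mathbf{PAb}$, $H/H$ for nonabelian $H$ is isomorphic to the trivial object but is not a $\mathbf{PAb}$-group with a Polish cover. Since every $\mathcal{S}$-group with a Polish cover is hit on the nose and the functor is fully faithful, you still obtain the stated equivalence onto the full subcategory.
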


More generally, one can consider regular subcategories $\mathcal{S}$ of $%
\mathbf{P}$ that are not thick. In this case one obtains an analogous
description of $\mathrm{Ex}\left( \mathcal{S}\right) $ as a \emph{not
necessarily full} subcategory of $\mathbf{PC}$, in terms of $\mathcal{S}$%
-definable homomorphisms.

\begin{definition}
\label{Definition:S-definable} Let $\mathcal{S}$ be a regular subcategory of 
$\mathbf{P}$, and $G/N$ and $H/M$ be $\mathcal{S}$-groups with a Polish
cover. Then a group homomorphism $f:G/N\rightarrow H/M$ is $\mathcal{S}$%
-definable if it is Borel-definable and its lifted graph $\widehat{\Gamma }%
\left( f\right) $ is isomorphic to an object of $\mathcal{S}$ with respect
to its canonical Polish group topology.
\end{definition}

Now let $\mathcal{S}\mathbf{C}$ be the category that has $\mathcal{S}$%
-groups with a Polish cover as objects, and $\mathcal{S}$-definable
homomorphisms as morphisms. The proof of \Cref{thm:main-completion} yields
the following more general result, which generalizes %
\Cref{Theorem:main-completion-thick} as well as \cite[Theorem~6.14]%
{lupini_looking_2024}.

\begin{theorem}
\label{Theorem:main-completion-S} Let $\mathcal{S}$ be a regular subcategory
of $\mathbf{P}$. Then the equivalence $\mathrm{Ex}\left( \mathbf{P}\right)
\simeq \mathbf{PC}$ restricts to an equivalence $\mathrm{Ex}\left( \mathcal{S%
}\right) \simeq \mathcal{S}\mathbf{C}$.
\end{theorem}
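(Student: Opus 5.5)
We redo the proof of \Cref{thm:main-completion} verbatim, but now working inside $\mathcal{S}$ instead of $\mathbf{P}$. By \Cref{Definition:regular-subcategory}, the inclusion $\mathcal{S}\rightarrow\mathbf{P}$ is full and regular. Hence finite limits, regular epimorphisms (continuous surjections), and regular images in $\mathcal{S}$ are computed as in $\mathbf{P}$.

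The first step is to identify the objects of $\mathrm{Ex}(\mathcal{S})$. An internal equivalence relation in $\mathcal{S}$ on an object $G$ is a subobject $R\rightarrow G\times G$ in $\mathcal{S}$ that is reflexive, symmetric and transitive. Since $\mathcal{S}\rightarrow\mathbf{P}$ preserves finite limits, $R$ is an internal equivalence relation in $\mathbf{P}$. As observed earlier in the paper, such a relation is $E_{N}^{G}$ for a Polish normal subgroup $N$, namely the kernel of the first projection $R\rightarrow G$. The requirement that $R$ lie in $\mathcal{S}$ is exactly that $E_{N}^{G}\in\mathcal{S}$, so the objects are the $\mathcal{S}$-groups with a Polish cover. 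Conversely, any $\mathcal{S}$-group with a Polish cover gives such a relation, because $E_{N}^{G}$ is a Polish subgroup of $G\times G$ by \Cref{Corollary:continuous-conjugation}, hence a subobject in $\mathbf{P}$, and it lies in $\mathcal{S}$ by hypothesis.

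The second step handles morphisms. A morphism $(G,E_{N}^{G})\rightarrow(H,E_{M}^{H})$ in $\mathrm{Ex}(\mathcal{S})$ is a relation $T$ in $\mathcal{S}$ satisfying
\begin{equation*}
TE_{N}^{G}=T=E_{M}^{H}T,\qquad E_{N}^{G}\leq T^{\circ}T,\qquad TT^{\circ}\leq E_{M}^{H}.
\end{equation*}
Since relational composition in $\mathrm{Rel}(\mathcal{S})$ uses pullbacks and regular images, both preserved by the inclusion, these are the same conditions in $\mathrm{Rel}(\mathbf{P})$. The argument of \Cref{thm:main-completion} then gives $T=\widehat{\Gamma}(f_{T})$ for a Borel-definable homomorphism $f_{T}$ whose lifted graph, with its canonical Polish topology (the one making $T\rightarrow G\times H$ a continuous injection, unique by \Cref{prop:borel-graph}), lies in $\mathcal{S}$. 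In other words, $f_{T}$ is $\mathcal{S}$-definable. Conversely, suppose $f$ is $\mathcal{S}$-definable. Then $\widehat{\Gamma}(f)\rightarrow G\times H$ is a continuous injective homomorphism between objects of $\mathcal{S}$, so it is a monomorphism in $\mathcal{S}$, that is, a relation in $\mathcal{S}$. The displayed identities hold for it because they hold in $\mathbf{P}$.

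The third step checks that identities and composition match. Identities correspond to $E_{N}^{G}=\widehat{\Gamma}(\mathrm{id})$, which is in $\mathcal{S}$ by assumption. Composition in $\mathrm{Ex}(\mathcal{S})$ agrees with composition in $\mathrm{Ex}(\mathbf{P})$, which is the lifted graph of the composite homomorphism. As a byproduct, this shows that $\mathcal{S}$-definable maps are closed under composition, so $\mathcal{S}\mathbf{C}$ is indeed a category. Finally, the functor $\mathrm{Ex}(\mathcal{S})\rightarrow\mathrm{Ex}(\mathbf{P})$ induced by the universal property acts on such relations simply as the identity, so the equivalence is the restriction of the one in \Cref{thm:main-completion}.

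The main obstacle is the claim that relational composition, and more generally each subobject condition, computed in $\mathcal{S}$ coincides with the one computed in $\mathbf{P}$. The point to check is that the regular image of $T\times_{H}T'\rightarrow G\times K$ in $\mathbf{P}$ is the same as in $\mathcal{S}$. This holds because the regular image of a map in $\mathcal{S}$ is a quotient of an object of $\mathcal{S}$, so by regularity of the inclusion the $\mathcal{S}$-image and the $\mathbf{P}$-image coincide. I would verify this first, and then carry the rest over from \Cref{thm:main-completion}.
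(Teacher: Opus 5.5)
Your proposal is correct and follows the paper's route: the paper proves this theorem by rerunning the proof of \Cref{thm:main-completion} inside $\mathcal{S}$. Your check that the regular inclusion preserves pullbacks, monomorphisms and regular images, so that relational composition and the order on relations agree in $\mathrm{Rel}(\mathcal{S})$ and $\mathrm{Rel}(\mathbf{P})$, is exactly the step the paper leaves implicit. One small correction: uniqueness of the Polish topology on $\widehat{\Gamma}(f_T)$ comes from the general uniqueness of Polish group topologies on Polish subgroups recalled in Section~\ref{Section:Polish}, not from \Cref{prop:borel-graph}.
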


\section{Better lifts \label{Section:better-lifts}}

In this section we show that, in certain situations, a Borel-definable
homomorphism between groups with a Polish cover has a lift with better
regularity than an arbitrary Borel function. For arbitrary (not necessarily
commutative) Polish groups, we generalize Proposition~5.3 and
Corollaries~6.19 and~6.20 of \cite{lupini_looking_2024}.

\subsection{A rectangular Baire category argument}

Suppose that $G/N$ and $H/M$ are groups with a Polish cover. Let $\varphi
:G/N\rightarrow H/M$ be a homomorphism, and let $f:G\rightarrow H$ be a
Borel lift of $\varphi $. Its \emph{multiplicative defect} is 
\begin{equation*}
\delta f:G\times G\longrightarrow M,\qquad \delta f(x,y)=f(x)f(y)f(xy)^{-1}%
\text{;}
\end{equation*}%
see \cite[Definition~5.1]{lupini_looking_2024}. We say that $f$ is \emph{%
approximately multiplicative} if $f(1)=1$ and 
\begin{equation*}
\delta f:G\times G\rightarrow M
\end{equation*}%
is continuous at $(1,1)$. This definition subsumes \cite[Definition~5.1]%
{lupini_looking_2024} in the particular case of abelian groups with a Polish
cover.

The following can be seen as the analogue for Polish groups that are not
necessarily abelian of \cite[Lemma~5.2]{lupini_looking_2024}.

\begin{lemma}
\label{Lemma:rectangular-fusion} Let $X$ and $Y$ be Polish spaces, let $H$
be a Polish group, let $P\subseteq X$ and $Q\subseteq Y$ be nonempty open
sets, and let 
\begin{equation*}
c:P\times Q\longrightarrow H
\end{equation*}%
be continuous with range contained in a Polish subgroup $M$ of $H$. Suppose
that $(K_{n})_{n\in \mathbb{N}}$ is a sequence of identity neighborhoods in $%
M$ such that 
\begin{equation*}
\overline{K}_{n}^{H}\,\cap M=K_{n}\qquad (n\in \mathbb{N}).
\end{equation*}%
Then there exist $a\in P$, $b\in Q$, open neighborhoods $P_{n}\subseteq P$
of $a$ and $Q_{n}\subseteq Q$ of $b$, and elements $m_{n}\in M$ such that 
\begin{equation*}
c(P_{n}\times Q_{n})\subseteq K_{n}m_{n}\qquad (n\in \mathbb{N}).
\end{equation*}
\end{lemma}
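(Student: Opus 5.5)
The plan is to combine a Baire category argument with the closure hypothesis $\overline{K}_{n}^{H}\cap M=K_{n}$, run recursively on shrinking open rectangles. Fix compatible complete metrics on $X$ and $Y$. I will build nonempty open sets $U_{n}\subseteq X$ and $W_{n}\subseteq Y$ and elements $m_{n}\in M$ such that:
\begin{itemize}
\item $U_{0}\subseteq P$ and $W_{0}\subseteq Q$;
\item $\overline{U}_{n+1}\subseteq U_{n}$ and $\overline{W}_{n+1}\subseteq W_{n}$;
\item $\func{diam}(U_{n}),\func{diam}(W_{n})\leq 2^{-n}$;
\item $c(U_{n}\times W_{n})\subseteq K_{n}m_{n}$.
\end{itemize}
Completeness then gives unique points $a\in \bigcap_{n}\overline{U}_{n}$ and $b\in\bigcap_{n}\overline{W}_{n}$. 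Since $a\in\overline{U}_{n+1}\subseteq U_{n}$, the open sets $P_{n}:=U_{n}$ are neighborhoods of $a$ contained in $P$. Likewise $Q_{n}:=W_{n}$ are neighborhoods of $b$ contained in $Q$. The required inclusion $c(P_{n}\times Q_{n})\subseteq K_{n}m_{n}$ is then exactly the last bullet.

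The key step is a single-level claim. Given any nonempty open rectangle $U\times W\subseteq P\times Q$ and any $n$, there are a nonempty open rectangle $U'\times W'\subseteq U\times W$ and $m\in M$ with $c(U'\times W')\subseteq K_{n}m$. For this, I first note that $M$ is Polish, hence Lindelöf, and $K_{n}$ is an identity neighborhood in $M$. So there are countably many $m^{(k)}\in M$ with $M=\bigcup_{k}K_{n}m^{(k)}$.

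Next I check that each set $c^{-1}(K_{n}m^{(k)})$ is closed in $P\times Q$. Since $c$ takes values in $M$, we have $c^{-1}(K_{n}m)=c^{-1}(\overline{K}_{n}^{H}m\cap M)=c^{-1}(\overline{K}_{n}^{H}m)$. Here the first equality uses that right translation by $m\in M$ preserves $M$, so $\overline{K}_{n}^{H}m\cap M=(\overline{K}_{n}^{H}\cap M)m=K_{n}m$. The set $\overline{K}_{n}^{H}m$ is closed in $H$ and $c$ is continuous into $H$, so the preimage is closed.

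Thus $U\times W$ is covered by countably many relatively closed sets. Since $U\times W$ is a nonempty open subset of a Polish space, the Baire category theorem gives one of them a nonempty interior. That interior contains a basic open rectangle $U'\times W'$, which proves the claim.

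To carry out the recursion, I start from $U\times W=P\times Q$ and apply the claim at level $n$ to the previous rectangle. I then shrink $U'$ and $W'$ to smaller nonempty open sets whose closures lie in the previous sets and whose diameters are at most $2^{-n}$; shrinking preserves the inclusion into $K_{n}m_{n}$. I expect the main obstacle to be the closedness of $c^{-1}(K_{n}m)$. The subtlety is that $c$ is continuous only into $H$, not into $M$ with its finer Polish topology, so preimages of $M$-open sets need not be open. The hypothesis $\overline{K}_{n}^{H}\cap M=K_{n}$ is exactly what makes the $M$-translates closed, and this is what lets Baire category apply without any Baire-measurability detour. Once that is settled, the rest of the argument is routine.
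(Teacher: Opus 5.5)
Your proof is correct and follows the paper's argument essentially step for step: a countable cover of $M$ by translates $K_{n}m$, Baire category on the current rectangle, recursive shrinking with closures nested and diameters at most $2^{-n}$, and completeness to produce $a$ and $b$. The only cosmetic difference is where the hypothesis $\overline{K}_{n}^{H}\cap M=K_{n}$ enters: you use it up front to see that $c^{-1}(K_{n}m)$ is already closed, whereas the paper applies Baire category to the sets $c^{-1}(\overline{K}_{n}^{H}m)$ and intersects with $M$ only at the end.
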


\begin{proof}
Fix compatible complete metrics on $X$ and $Y$. Put $P_{-1}=P$ and $Q_{-1}=Q$%
. Suppose that $P_{n-1}$ and $Q_{n-1}$ have been constructed. Since $M$ is
separable and $K_{n}$ is an identity neighborhood in $M$, there is a
countable set $D_{n}\subseteq M$ such that 
\begin{equation*}
M=\bigcup_{m\in D_{n}}K_{n}m.
\end{equation*}%
For $m\in D_{n}$, set 
\begin{equation*}
F_{m}=\{(x,y)\in P_{n-1}\times Q_{n-1}:c(x,y)\in \overline{K}_{n}^{\,H}m\}.
\end{equation*}%
These are relatively closed subsets of $P_{n-1}\times Q_{n-1}$, and they
cover that space. Hence by the Baire Category Theorem, some $F_{m_{n}}$ has
nonempty relative interior. Choose nonempty open sets $P_{n}\subseteq
P_{n-1} $ and $Q_{n}\subseteq Q_{n-1}$, of diameter at most $2^{-n}$, such
that 
\begin{equation*}
\overline{P}_{n}\subseteq P_{n-1},\qquad \overline{Q}_{n}\subseteq
Q_{n-1},\qquad \overline{P}_{n}\times \overline{Q}_{n}\subseteq F_{m_{n}}.
\end{equation*}

The nested closed sets $\overline{P}_{n}$ and $\overline{Q}_{n}$ have
diameters tending to zero. Completeness yields unique points 
\begin{equation*}
\{a\}=\bigcap_{n}\overline{P}_{n},\qquad \{b\}=\bigcap_{n}\overline{Q}_{n}.
\end{equation*}%
Since $\overline{P}_{n+1}\subseteq P_{n}$ and $\overline{Q}_{n+1}\subseteq
Q_{n}$, the sets $P_{n}$ and $Q_{n}$ are neighborhoods of $a$ and $b$.
Finally, because $c$ takes values in $M$, 
\begin{equation*}
c(P_{n}\times Q_{n})\subseteq (\overline{K}_{n}^{\,H}m_{n})\cap M=K_{n}m_{n}.
\end{equation*}
\end{proof}

\subsection{The lifting theorem}

As in \cite[Section~5]{lupini_looking_2024}, we deduce from %
\Cref{Lemma:rectangular-fusion} the following analogue of \cite[%
Proposition~5.3]{lupini_looking_2024}. Let $X,Y$ be Polish groups. Let $%
u:X\rightarrow Y$ be a function. Then $u$ is:

\begin{itemize}
\item \emph{locally continuous} if it is continuous on an identity
neighborhood of $X$;

\item (when $X$ is locally compact) \emph{locally bounded} if $u(C)$ has
compact closure in $Y$ for every compact subset $C$ of $X$ \cite%
{kehlet_cross_1984}.
\end{itemize}

\begin{theorem}
\label{Theorem:approximately-multiplicative-lift} Let $G/N$ and $H/M$ be
groups with a Polish cover, and let 
\begin{equation*}
\varphi :G/N\longrightarrow H/M
\end{equation*}%
be a homomorphism. Assume that $M$ has a basis of identity neighborhoods
that are closed in the subspace topology inherited from $H$. Suppose that $%
\varphi $ has a lift $f:G\rightarrow H$ that is Borel and continuous on an
identity neighborhood. Then:

\begin{enumerate}
\item $\varphi $ has an approximately multiplicative, locally continuous
Borel lift;

\item if $G$ is locally compact and $f$ is also locally bounded, then $%
\varphi $ has an approximately multiplicative, locally continuous, and
locally bounded Borel lift.
\end{enumerate}
\end{theorem}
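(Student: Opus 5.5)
The plan is to apply \Cref{Lemma:rectangular-fusion} to the multiplicative defect of $f$ near a suitable pair of points, and then translate $f$ so that those points move to the identity.

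Fix an identity neighborhood $U$ of $G$ on which $f$ is continuous, and a symmetric open identity neighborhood $W$ with $W^{2}\subseteq U$. Set
\begin{equation*}
c(x,y)=\delta f(x,y)=f(x)f(y)f(xy)^{-1}, \qquad (x,y)\in W\times W.
\end{equation*}
This map is continuous into $H$ and takes values in $M$, because $f$ lifts a homomorphism. By hypothesis, choose a decreasing basis $(K_{n})$ of identity neighborhoods of $M$ with $\overline{K}_{n}^{H}\cap M=K_{n}$. \Cref{Lemma:rectangular-fusion} then gives points $a,b\in W$, neighborhoods $P_{n}\ni a$ and $Q_{n}\ni b$, and elements $m_{n}\in M$ such that
\begin{equation*}
\delta f(P_{n}\times Q_{n})\subseteq K_{n}m_{n}.
\end{equation*}
Since $\delta f(a,b)$ lies in every $K_{n}m_{n}$, we may replace $m_{n}$ by $\delta f(a,b)$ at the cost of enlarging $K_{n}$ to $K_{n}K_{n}^{-1}$, which is still a basis. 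So $\delta f$, viewed as a map into $M$ with its Polish topology, is continuous at $(a,b)$ along the rectangles $P_{n}\times Q_{n}$.

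Next, move $(a,b)$ to $(1,1)$. For $x,y$ near $1$ we have $(ax,yb)\in P_{n}\times Q_{n}$, and the defect identity reads
\begin{equation*}
f(ax)f(yb)=\delta f(ax,yb)\,f(axyb).
\end{equation*}
I would define the new lift
\begin{equation*}
g(z)=f(a)^{-1}\,f(azb)\,f(b)^{-1}\cdot \mu,
\end{equation*}
where $\mu\in M$ is a normalizing constant chosen so that $g(1)=1$. This is a lift of $\varphi$: $g(z)M=\varphi(a)^{-1}\varphi(a)\varphi(z)\varphi(b)\varphi(b)^{-1}=\varphi(z)$. It is Borel, and it is continuous on an identity neighborhood because $azb\in U$ for $z$ near $1$.

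The main obstacle is the algebraic verification that $\delta g(x,y)$ tends to $1$ in $M$ as $(x,y)\to(1,1)$. I would expand $\delta g(x,y)$ as a product of factors of the form $\delta f(ax',y'b)\,\delta f(a,b)^{-1}$ with $(x',y')\to(1,1)$, conjugated by elements of $H$ that depend continuously on $(x,y)$ near fixed values such as $f(a)$ and $f(b)$. The factors $\delta f(ax',y'b)\,\delta f(a,b)^{-1}$ converge to $1$ in $M$ by the previous paragraph. The conjugations are handled by \Cref{Corollary:continuous-conjugation}: the conjugation action of $H$ on $M$ is continuous, and conjugation is applied only by elements lying in a convergent family, so conjugates of elements tending to $1$ in $M$ also tend to $1$ in $M$.

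If the direct two-sided translate does not produce a telescoping product, I would first try a one-sided version, replacing $f$ by $x\mapsto f(a)^{-1}f(ax)$ and using the defect on $P_{n}\times W$. Since $H$ need not be abelian, the order of the factors has to be tracked throughout. This gives (1).

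For (2), suppose in addition that $G$ is locally compact and $f$ is locally bounded. Then $g$ is locally bounded as well. For a compact set $C\subseteq G$, the set $aCb$ is compact, so $f(aCb)$ has compact closure in $H$, and multiplying by the fixed elements $f(a)^{-1}$, $f(b)^{-1}$ and $\mu$ preserves relative compactness. The same $g$ therefore witnesses (2).
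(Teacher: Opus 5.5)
\paragraph{Verdict.} Your plan is the paper's plan: apply \Cref{Lemma:rectangular-fusion} to $c=\delta f$ on $W\times W$, translate on both sides, and control the defect with \Cref{Corollary:continuous-conjugation}. Solving for $\mu=f(b)f(ab)^{-1}f(a)\in M$, your lift is $g(z)=f(a)^{-1}f(azb)f(ab)^{-1}f(a)$. This is the paper's lift $f(b)f(ab)^{-1}f(azb)f(b)^{-1}$ conjugated by the fixed element $\mu\in M$, so it works equally well. The gap is that the step you call the main obstacle is only announced, and you hedge on whether it telescopes. That computation is the whole content of approximate multiplicativity, so as written (1) is not proved.

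\paragraph{Filling the gap.} The product does telescope. Put
$c_{00}=\delta f(a,b)$, $c_{x0}=\delta f(ax,b)$, $c_{0y}=\delta f(a,yb)$, $c_{xy}=\delta f(ax,yb)$ and $R_x=f(ax)f(a)^{-1}$. First, $\delta g(x,y)=f(a)^{-1}f(axb)f(ab)^{-1}f(ayb)f(axyb)^{-1}f(a)$. Substitute $f(uv)=\delta f(u,v)^{-1}f(u)f(v)$ for the four products $(ax)b$, $ab$, $a(yb)$, $(ax)(yb)$. The factors $f(b)$ and $f(yb)$ cancel, leaving
\[
\delta g(x,y)=f(a)^{-1}\,c_{x0}^{-1}\,R_{x}\bigl(c_{00}c_{0y}^{-1}\bigr)R_{x}^{-1}\,c_{xy}\,f(a).
\]
Now let $(x,y)\to(1,1)$.
\begin{itemize}
\item $c_{x0},c_{0y},c_{xy}\to c_{00}$ in $M$, by your continuity of $\delta f$ at $(a,b)$ into $M$.
\item $R_x\to 1$ in $H$, because $f$ is continuous at $a\in W\subseteq U$.
\item Joint continuity of the conjugation action $H\times M\to M$ gives $R_x(c_{00}c_{0y}^{-1})R_x^{-1}\to 1$ in $M$.
\end{itemize}
Hence $\delta g(x,y)\to f(a)^{-1}c_{00}^{-1}c_{00}f(a)=1$ in $M$. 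This is the paper's identity \eqref{Equation:four-corner} conjugated by $\mu$.

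\paragraph{The fallback would not help.} For $x\mapsto f(a)^{-1}f(ax)$ the defect is
$f(a)^{-1}R_x\,\delta f(a,y)^{-1}R_x^{-1}\,\delta f(ax,y)\,f(a)$. Controlling this requires $M$-continuity of $\delta f$ near $(a,1)$, but the lemma gives no control over where $b$ lands. The two-sided translation is exactly what moves an arbitrary good point $(a,b)$ to $(1,1)$. The rest of your argument is fine: the lift property, the normalization $g(1)=1$, local continuity, the reduction from $K_nm_n$ to $K_nK_n^{-1}\delta f(a,b)$, and part (2).
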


\begin{proof}
Let $f:G\rightarrow H$ be a Borel lift of $\varphi $ that is continuous on
an open identity neighborhood $U$ of $G$. Choose an open identity
neighborhood $V$ such that $V^{2}\subseteq U$ and define 
\begin{equation*}
c:V\times V\longrightarrow M,\qquad c(u,v)=f(u)f(v)f(uv)^{-1}.
\end{equation*}%
As a map into $H$, the function $c$ is continuous.

Choose a symmetric identity-neighborhood basis $(K_{n})_{n\in \mathbb{N}}$
of $M$ such that 
\begin{equation*}
\overline{K}_{n}^{\,H}\cap M=K_{n}\qquad (n\in \mathbb{N}).
\end{equation*}%
Apply \Cref{Lemma:rectangular-fusion} to obtain $a,b\in V$, open
neighborhoods $P_{n},Q_{n}\subseteq V$ of $a,b$, and $m_{n}\in M$ such that 
\begin{equation}
c(P_{n}\times Q_{n})\subseteq K_{n}m_{n}.  \label{Equation:one-coset}
\end{equation}%
Define 
\begin{equation}
g(t)=f(b)f(ab)^{-1}f(atb)f(b)^{-1}\qquad (t\in G).  \label{Equation:new-lift}
\end{equation}%
This is a Borel map and $g(1)=1$. Moreover, 
\begin{align*}
g(t)M& =\varphi (bN)\varphi (abN)^{-1}\varphi (atbN)\varphi (bN)^{-1} \\
& =\varphi (tN),
\end{align*}%
so $g$ is a lift of $\varphi $. Since $ab\in U$, there is an identity
neighborhood $W$ in $G$ such that $aWb\subseteq U$. Equation~%
\eqref{Equation:new-lift} shows that $g|_{W}$ is continuous.

It remains to control the multiplicative defect of $g$. For $x,y$ close to
the identity, put 
\begin{equation*}
\begin{array}{ll}
c_{00}=c(a,b), & c_{x0}=c(ax,b), \\ 
c_{0y}=c(a,yb), & c_{xy}=c(ax,yb),%
\end{array}%
\end{equation*}%
and define 
\begin{equation*}
\begin{aligned} A_x&=c_{00}c_{x0}^{-1},&\qquad B_y&=c_{00}c_{0y}^{-1},\\
E_{x,y}&=c_{00}c_{xy}^{-1},& R_x&=f(ax)f(a)^{-1}. \end{aligned}
\end{equation*}%
The four identities 
\begin{equation*}
\begin{array}{ll}
f(ab)=c_{00}^{-1}f(a)f(b), & f(axb)=c_{x0}^{-1}f(ax)f(b), \\ 
f(ayb)=c_{0y}^{-1}f(a)f(yb), & f(axyb)=c_{xy}^{-1}f(ax)f(yb)%
\end{array}%
\end{equation*}%
give, after substitution in \eqref{Equation:new-lift}, the four-corner
identity 
\begin{equation}
\delta g(x,y)=f(a)^{-1}A_{x}R_{x}B_{y}R_{x}^{-1}E_{x,y}^{-1}f(a).
\label{Equation:four-corner}
\end{equation}

Let $T$ be an identity neighborhood in $M$. By %
\Cref{Corollary:continuous-conjugation}, the map 
\begin{equation*}
M\times H\times M\times M\longrightarrow M,\qquad (A,R,B,E)\longmapsto
f(a)^{-1}ARBR^{-1}E^{-1}f(a)
\end{equation*}%
is continuous at $(1,1,1,1)$. Hence there are identity neighborhoods $S$ in $%
M$ and $O$ in $H$ such that 
\begin{equation}
f(a)^{-1}SOSO^{-1}S^{-1}f(a)\subseteq T.
\label{Equation:neighborhood-control}
\end{equation}%
Choose $n$ so that $K_{n}K_{n}^{-1}\subseteq S$. For $x,y$ sufficiently
close to $1$, all four pairs 
\begin{equation*}
(a,b),\quad (ax,b),\quad (a,yb),\quad (ax,yb)
\end{equation*}%
belong to $P_{n}\times Q_{n}$. By \eqref{Equation:one-coset}, the
corresponding four values of $c$ belong to the same right translate $%
K_{n}m_{n}$. Therefore 
\begin{equation*}
A_{x},B_{y},E_{x,y}\in K_{n}K_{n}^{-1}\subseteq S.
\end{equation*}%
Furthermore, $R_{x}\rightarrow 1$ as $x\rightarrow 1$, since $f$ is
continuous at $a$. After shrinking the neighborhood of $1$ in $G$, we may
thus assume that $R_{x}\in O$. Equations \eqref{Equation:four-corner} and %
\eqref{Equation:neighborhood-control} imply that $\delta g(x,y)\in T$. Since 
$T$ was arbitrary, $\delta g$ is continuous at $(1,1)$ as an $M$-valued map.

(2) The definition of $g$ from $f$ above shows that when $f$ is locally
bounded, then so is $g$.
\end{proof}

\begin{corollary}
\label{Corollary:locally-compact} Suppose, in the setting of %
\Cref{Theorem:approximately-multiplicative-lift}, that $M$ is locally
compact. Then every homomorphism $G/N\rightarrow H/M$ that has a locally
continuous Borel lift has an approximately multiplicative, locally
continuous Borel lift.
\end{corollary}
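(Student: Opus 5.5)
The plan is to reduce directly to \Cref{Theorem:approximately-multiplicative-lift}(1). That theorem needs two inputs: a Borel lift that is continuous on an identity neighborhood, and the hypothesis that $M$ has a basis of identity neighborhoods closed in the subspace topology inherited from $H$. The first input is given outright: the homomorphism is assumed to have a locally continuous Borel lift $f:G\rightarrow H$. So the whole task is to check the second input from the local compactness of $M$.

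For this, I use that $M$ is locally compact in its Polish group topology. Hence $M$ has a basis $(K_n)_{n\in\mathbb{N}}$ of identity neighborhoods that are compact in $M$; I may take them symmetric. The inclusion $M\rightarrow H$ is continuous, so each $K_n$ is compact in $H$, and $H$ is Hausdorff, so $K_n$ is closed in $H$. It follows that
\begin{equation*}
\overline{K}_{n}^{\,H}\cap M=K_{n}\cap M=K_{n},
\end{equation*}
which is exactly the condition used in the proof of \Cref{Theorem:approximately-multiplicative-lift}, and in particular each $K_n$ is closed in the subspace topology that $M$ inherits from $H$.

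With both hypotheses in place, \Cref{Theorem:approximately-multiplicative-lift}(1) gives an approximately multiplicative, locally continuous Borel lift. I expect no real obstacle here. The one point to watch is that compactness must be taken in the Polish topology of $M$, not in the subspace topology, and then pushed forward along the continuous inclusion. For the same reason, the defect is continuous at $(1,1)$ as an $M$-valued map with respect to that Polish topology, which is what the definition of approximately multiplicative requires.
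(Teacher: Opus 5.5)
Your proposal is correct and matches the paper's proof: take a basis of compact identity neighborhoods of $M$ in its Polish topology. Their images under the continuous inclusion are compact, hence closed, in $H$, so the hypothesis of Theorem~\ref{Theorem:approximately-multiplicative-lift}(1) holds and that theorem applies directly.
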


\begin{proof}
The locally compact group $M$ has a basis of compact identity neighborhoods.
Their images in the Polish group $H$ are compact, hence closed, because the
inclusion $M\rightarrow H$ is continuous. Thus the hypothesis of %
\Cref{Theorem:approximately-multiplicative-lift} is automatic.
\end{proof}

\subsection{Locally compact covers}

We will apply some classical results about \emph{cross-sections} in the
context of topological groups. Cross sections for closed subgroups of
finite-dimensional locally compact groups were studied in \cite%
{mostert_local_1953,nagami_cross_1963,nagami_dimension-theoretical_1962,karube_local_1958,serre_extensions_1952,mackey_induced_1952}%
. Notice that on locally compact Polish spaces the Borel $\sigma $-algebra
and the Baire $\sigma $-algebra as defined in \cite{kehlet_cross_1984}
coincide.

\begin{lemma}
\label{Lemma:locally-compact-sections} Let $p:E\rightarrow G$ be a
surjective continuous homomorphism of locally compact Polish groups.

\begin{enumerate}
\item The map $p$ has a locally bounded Borel right inverse.

\item If $E$ has finite covering dimension, then $p$ has a locally bounded
Borel right inverse that is continuous on an open identity neighborhood in $%
G $.

\item If $\ker (p)$ is a Lie group, then $p$ has a locally bounded Borel
right inverse that is continuous on an open identity neighborhood in $G$.

\item If $E$ and $G$ are real Lie groups, then $p$ has a locally bounded
Borel right inverse that is real analytic on an open identity neighborhood
in $G$.
\end{enumerate}

In each case the right inverse may be chosen to map $1$ to $1$.
\end{lemma}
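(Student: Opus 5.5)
The plan is to reduce everything to classical cross-section theorems for locally compact groups, applied to the closed normal subgroup $K=\ker(p)$ of $E$. By the Open Mapping Theorem for Polish (equivalently, $\sigma$-compact locally compact) groups, $p$ is open and induces a topological isomorphism $E/K\cong G$. Hence it suffices to produce suitable cross-sections $\sigma :E/K\rightarrow E$ of the quotient map $E\rightarrow E/K$ and transport them along this isomorphism. The final normalization is routine. If $s$ is any right inverse, replace it by $s'(x)=s(x)s(1)^{-1}$. Since $s(1)\in K$, this is still a right inverse, it sends $1$ to $1$, and right translation by a fixed element preserves Borel measurability, local boundedness (compact sets map into compact sets), and continuity or real analyticity on a given open set.

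For (1), I would invoke Kehlet's theorem \cite{kehlet_cross_1984}. It states that for a closed subgroup of a locally compact group, the quotient map admits a Baire measurable cross-section that is locally bounded. On locally compact Polish spaces Baire and Borel sets coincide, as the paper remarks, so this gives the claim directly. If I wanted to avoid quoting Kehlet, a hands-on argument goes as follows. Cover $G$ by countably many compact sets $C_j=p(D_j)$ with $D_j\subseteq E$ compact. This is possible because $E$ is $\sigma$-compact and $p$ is open, so $p$ maps compact neighborhoods onto neighborhoods. On each $C_j$, choose a Borel selector of the closed-valued map $x\mapsto p^{-1}(x)\cap D_j$ by Kuratowski--Ryll-Nardzewski. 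Then define $s$ on $C_j\setminus\bigcup_{i<j}C_i$ using the $j$-th selector. Local boundedness follows because any compact subset of $G$ is covered by the interiors of finitely many $C_j$, provided we arrange that $\operatorname{int}C_j$ also cover $G$. Its image therefore lies in finitely many compact $D_j$.

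For (2) and (3), the key additional ingredient is a \emph{local} continuous cross-section near the identity. In case (3), $K$ is a Lie group, and the classical results of Gleason, Mostert and Karube \cite{mostert_local_1953,karube_local_1958} give local cross-sections for quotients by closed Lie subgroups. In case (2), $E$ is finite-dimensional, and Mostert and Nagami \cite{mostert_local_1953,nagami_cross_1963} show that $E\rightarrow E/K$ admits a local cross-section. Given a continuous local section $\tau$ on a compact identity neighborhood $W$ of $G$, I would then glue. Set $s=\tau$ on the interior of $W$ (after normalizing $\tau(1)=1$ as above), and use the locally bounded Borel section from (1) on the complement. The result is Borel and continuous on $\operatorname{int}W$. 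It remains locally bounded because $\tau(W)$ is compact and the other piece is locally bounded.

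For (4), $K$ is a closed subgroup of the Lie group $E$, hence an embedded Lie subgroup, and $E\rightarrow E/K$ is a real-analytic principal $K$-bundle. A real-analytic local section near $1$ is obtained by exponentiating a complement $\mathfrak{m}$ of $\mathfrak{k}$ in $\mathfrak{e}$. The map $X\mapsto p(\exp X)$ is a real-analytic diffeomorphism from a neighborhood of $0$ in $\mathfrak{m}$ onto a neighborhood of $1$ in $G$, and composing its inverse with $\exp$ gives the section. I then glue as in (2), shrinking to a compact neighborhood so that local boundedness is preserved.

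The main obstacle is step (2). The general finite-dimensional local cross-section theorem is deep, and one must check that the cited results apply to closed normal subgroups of arbitrary finite-dimensional locally compact second countable groups, not merely Lie groups. In particular, I expect to need a careful citation, or a reduction via Iwasawa/Yamabe structure, to an open subgroup that is an extension of a Lie group by a compact group.
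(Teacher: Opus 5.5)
Your proposal is correct and follows the paper's proof. Both use Kehlet's theorem for (1) and classical local cross-section theorems for (2)--(4); your exponential-of-a-complement argument in (4) is the standard proof of the Varadarajan result the paper cites. Both then glue the local section, on a relatively compact identity neighborhood, to the Kehlet section, and normalize by right translation by $s(1)^{-1}\in\ker(p)$. The obstacle you flag in (2) does not arise: the paper simply invokes Nagami's cross-section theorem \cite{nagami_cross_1963} for closed subgroups of finite-dimensional locally compact groups, with no Iwasawa--Yamabe reduction.
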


\begin{proof}
Assertion (1) is proved in \cite{kehlet_cross_1984}. Fix a locally bounded
Borel right inverse $b:G\rightarrow E$ given by that result. After replacing 
$b(x)$ with $b(x)b(1)^{-1}$, we may assume that $b(1)=1$.

For (2), a continuous local right inverse is provided by \cite%
{nagami_cross_1963}. For (3), it is provided by \cite[Th\'{e}or\`{e}me~1]%
{serre_extensions_1952}; see also \cite{gleason_spaces_1950}. For (4), a
real-analytic local right inverse is provided by \cite[Theorem~2.9.5]%
{varadarajan_lie_1984}. In each case, let $s:U\rightarrow E$ be the
resulting right inverse on an open identity neighborhood $U$ in $G$. After
replacing $s(x)$ with $s(x)s(1)^{-1}$, we may assume that $s(1)=1$.

Choose an open identity neighborhood $V$ such that $\overline{V}$ is compact
and $\overline{V}\subseteq U$, and define 
\begin{equation*}
r(x)= 
\begin{cases}
s(x), & x\in V, \\ 
b(x), & x\notin V.%
\end{cases}%
\end{equation*}
Then $r$ is a Borel right inverse for $p$, it maps $1$ to $1$, and it has
the required regularity on $V$. It is locally bounded: if $C\subseteq G$ is
compact, then 
\begin{equation*}
r(C)\subseteq s(\overline{V})\cup b(C),
\end{equation*}
whose closure is compact.
\end{proof}

The following corollary subsumes \cite[Corollary~6.19]{lupini_looking_2024}
in the case of abelian groups with a Polish cover. Its proof is the same as
the proof of \cite[Corollary~6.19]{lupini_looking_2024}, by using Lemma~\ref%
{Lemma:locally-compact-sections}, and replacing \cite[Theorem 6.18]%
{lupini_looking_2024} with Theorem~\ref{Theorem:main-completion-S} and
Proposition~\ref{prop:borel-graph}.

\begin{corollary}
\label{Corollary:locally-compact-Borel} Let $G/N$ and $H/M$ be groups with a
Polish cover and let 
\begin{equation*}
\varphi:G/N\longrightarrow H/M
\end{equation*}
be a group homomorphism. Suppose that $G$ and $M$, with their given Polish
group topologies, are locally compact. Then the following assertions are
equivalent:

\begin{enumerate}
\item $\varphi$ is Borel-definable;

\item $\varphi$ has a locally bounded Borel lift $G\rightarrow H$.
\end{enumerate}

If, furthermore, either both $G$ and $M$ have finite covering dimension or $%
M $ is a real Lie group, these conditions are equivalent to:

\begin{enumerate}
\item \setcounter{enumi}{2}

\item $\varphi $ has an approximately multiplicative, locally bounded, and
locally continuous Borel lift.
\end{enumerate}
\end{corollary}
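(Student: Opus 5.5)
The plan is to prove (2)$\Rightarrow$(1), then (1)$\Rightarrow$(2), then handle (3) under the extra hypotheses; throughout we use the Polish subgroup $\widehat{\Gamma }(\varphi )$ from \Cref{prop:borel-graph}.

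\emph{(2)$\Rightarrow$(1).} A locally bounded Borel lift is in particular a Borel lift, so $\varphi$ is Borel-definable by definition. Similarly, (3)$\Rightarrow$(2) is immediate.

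\emph{(1)$\Rightarrow$(2).} By \Cref{prop:borel-graph}, the lifted graph $Z=\widehat{\Gamma }(\varphi )\subseteq G\times H$ is a Polish subgroup. With its Polish topology it fits into the extension
\begin{equation*}
1\longrightarrow M\longrightarrow Z\overset{p_{G}}{\longrightarrow }G\longrightarrow 1 ,
\end{equation*}
where $p_{G}$ is continuous and surjective, and $M\cong \{1\}\times M$ is closed in $Z$. By \Cref{thm:brown-extension}, $M$ carries the subspace topology from $Z$ and $p_{G}$ is open. Since $G$ and $M$ are locally compact, $Z$ is locally compact, because local compactness is a three-space property (as used for $\mathbf{LCP}$ above).

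\Cref{Lemma:locally-compact-sections}(1) then gives a locally bounded Borel right inverse $s:G\rightarrow Z$. Put $f=p_{H}\circ s$. This is a Borel lift of $\varphi$, since $(g,p_{H}s(g))\in \widehat{\Gamma }(\varphi )$. It is locally bounded, since $p_{H}:Z\rightarrow H$ is continuous and maps relatively compact sets to relatively compact sets.

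\emph{The third condition.} Under the extra hypotheses, the key input is that $p_{G}$ has a section that is continuous near the identity. Two cases arise.
\begin{itemize}
\item If $M$ is a real Lie group, then $\ker(p_{G})\cong M$ is Lie, and \Cref{Lemma:locally-compact-sections}(3) applies.
\item If $G$ and $M$ are finite-dimensional, then $Z$ is finite-dimensional by the dimension formula for extensions already cited for $\mathbf{FDLCP}$ \cite{nagami_dimension-theoretical_1962}, and \Cref{Lemma:locally-compact-sections}(2) applies.
\end{itemize}
In either case we obtain $s$ that is locally bounded, Borel, and continuous on an identity neighborhood. Hence $f=p_{H}\circ s$ is a locally bounded, locally continuous Borel lift.

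Next apply \Cref{Theorem:approximately-multiplicative-lift}(2). This requires $M$ to have a basis of identity neighborhoods closed in the topology inherited from $H$. As in \Cref{Corollary:locally-compact}, compact identity neighborhoods of $M$ have compact, hence closed, images in $H$. The theorem then yields an approximately multiplicative, locally continuous, locally bounded Borel lift.

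\emph{Expected main obstacle.} The delicate step is checking that the regularity transfers correctly through $Z$. Specifically, one must confirm that the topology of $M$ as a subgroup of $Z$ coincides with its own locally compact topology, which is supplied by \Cref{thm:brown-extension}. One must also confirm that $Z$ satisfies the hypotheses of the cross-section lemma (locally compact, and finite-dimensional or with Lie kernel). The rest is formal.
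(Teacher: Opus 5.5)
Your proposal is correct and follows the paper's argument. You take the lifted graph $\widehat{\Gamma}(\varphi)$ from \Cref{prop:borel-graph}, note that it is a locally compact extension of $G$ by $M$, and take a section of $p_G$ with the required regularity from \Cref{Lemma:locally-compact-sections}; composing with $p_H$ gives the lift, which \Cref{Theorem:approximately-multiplicative-lift}(2) upgrades under the extra hypotheses. Your direct appeals to the three-space property of local compactness and to Nagami's dimension formula are what the paper packages as the thickness of $\mathbf{LCP}$ and $\mathbf{FDLCP}$ when it cites \Cref{Theorem:main-completion-S}.
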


\subsection{Lie covers}

By definition, a group with a Polish cover $G/N$ is a \emph{group with a }%
(real)\emph{\ Lie cover} if both $G$ and $N$ are real Lie groups. The next
result is the noncommutative version of \cite[Corollary~6.20]%
{lupini_looking_2024}. It is proved in the same fashion, by appealing to
Lemma~\ref{Lemma:locally-compact-sections}(4).

\begin{corollary}
\label{Corollary:Lie} Let $G/N$ and $H/M$ be groups with a real Lie cover,
and let 
\begin{equation*}
\varphi:G/N\longrightarrow H/M
\end{equation*}
be a group homomorphism. Then the following assertions are equivalent:

\begin{enumerate}
\item $\varphi$ is Borel-definable;

\item $\varphi $ has an approximately multiplicative Borel lift $%
G\rightarrow H$ that is real analytic on an open identity neighborhood in $G$%
.
\end{enumerate}
\end{corollary}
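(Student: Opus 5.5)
The implication (2)$\Rightarrow$(1) is immediate, since a lift as in (2) is in particular a Borel lift. For (1)$\Rightarrow$(2) we combine three earlier results: Proposition~\ref{prop:borel-graph} to represent $\varphi$ through its lifted graph, Lemma~\ref{Lemma:locally-compact-sections}(4) to produce a lift that is real analytic near the identity, and Theorem~\ref{Theorem:approximately-multiplicative-lift} to correct that lift into an approximately multiplicative one.

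\emph{Step 1 (the graph is a Lie group).} By Proposition~\ref{prop:borel-graph}, $Z:=\widehat{\Gamma}(\varphi)$ is a Polish subgroup of $G\times H$ containing $N\times M$. The projection $p_{G}:Z\rightarrow G$ is a surjective continuous homomorphism with kernel $\{1\}\times M$, which is a Lie group. This gives an extension of Polish groups
\begin{equation*}
1\longrightarrow M\longrightarrow Z\longrightarrow G\longrightarrow 1
\end{equation*}
in which both $M$ and $G$ are Lie groups. Lie groups form a thick subcategory (item (5) of the thickness proposition, via Serre's theorem), and $Z$ is a subobject of $G\times H\in\mathbf{Lie}$. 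Hence $Z$ is a real Lie group in its Polish topology. Alternatively, local compactness is a three-space property and Gleason--Yamabe applies, since $Z$ has no small subgroups.

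\emph{Step 2 (an analytic section).} By Lemma~\ref{Lemma:locally-compact-sections}(4), $p_{G}$ has a locally bounded Borel right inverse $s:G\rightarrow Z$ with $s(1)=1$ that is real analytic on an open identity neighborhood $U$ of $G$. Set $f=p_{H}\circ s$. Then $f$ is a Borel lift of $\varphi$ with $f(1)=1$, and it is real analytic on $U$ because $p_{H}:Z\rightarrow H$ is a continuous homomorphism of Lie groups and is therefore analytic.

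\emph{Step 3 (making the lift approximately multiplicative).} Since $M$ is locally compact, Corollary~\ref{Corollary:locally-compact} shows that the hypothesis of Theorem~\ref{Theorem:approximately-multiplicative-lift} holds. That theorem then produces the approximately multiplicative lift
\begin{equation*}
g(t)=f(b)f(ab)^{-1}f(atb)f(b)^{-1}.
\end{equation*}
This formula is the main point to check, because the theorem only asserts local continuity, whereas we need real analyticity. On an identity neighborhood $W$ with $aWb\subseteq U$, the map $t\mapsto f(atb)$ is the composite of the analytic map $t\mapsto atb$ with $f|_{U}$. Multiplying by the constants $f(b)f(ab)^{-1}$ and $f(b)^{-1}$ in the Lie group $H$ preserves analyticity. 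So $g|_{W}$ is real analytic, and $g$ is Borel with $g(1)=1$, as required.

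The step I expect to be the main obstacle is Step 1, namely confirming that the canonical Polish topology on $Z$ is indeed a Lie group topology so that Lemma~\ref{Lemma:locally-compact-sections}(4) applies. If citing thickness of $\mathbf{Lie}$ turns out to be awkward, I would argue directly instead. By Theorem~\ref{thm:brown-extension}, the topology on $Z$ makes $M\rightarrow Z$ an embedding and $p_{G}$ open. Local compactness of $Z$ then follows from that of $M$ and $G$. For the Lie structure, I would use Lemma~\ref{Lemma:locally-compact-sections}(3), which gives a local continuous section identifying a neighborhood of the identity in $Z$ with a product of neighborhoods in $M$ and $G$. That product is a finite-dimensional manifold, so $Z$ has no small subgroups and is Lie by Gleason--Yamabe.
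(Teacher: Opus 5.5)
Your proposal is correct and follows the paper's route. The lifted graph $\widehat{\Gamma}(\varphi)$ is a Lie group by thickness of $\mathbf{Lie}$ (which is what Theorem~\ref{Theorem:main-completion-thick} encodes), Lemma~\ref{Lemma:locally-compact-sections}(4) gives a Borel lift that is analytic near the identity, and the explicit correction $g(t)=f(b)f(ab)^{-1}f(atb)f(b)^{-1}$ from the proof of Theorem~\ref{Theorem:approximately-multiplicative-lift} preserves analyticity on $W$. Your fallback for Step~1 is not needed; if you do keep it, note that the no-small-subgroups property of $Z$ follows directly from that of $M$ and $G$ via the extension, whereas deriving it from local Euclideanity silently invokes Montgomery--Zippin.
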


\bibliographystyle{amsplain}
\bibliography{bibliography}

\end{document}